\theoremstyle{definition}
\newtheorem{thm}{Theorem}
\newtheorem{cor}{Corollary}
\newtheorem{rem}{Remark}
\title{\Large \bf Lancaster distributions and Markov chains with Multivariate Poisson-Charlier, Meixner and Hermite-Chebycheff polynomial \textcolor{black}{eigenfunctions}}
\author{\sc Robert Griffiths}
\begin{document}
\maketitle
\section*{Address}
\noindent
Professor R. C. Griffiths
\\Department of Statistics,\\ University of Oxford,\\ 1 South Parks Rd, Oxford, OX1 3TG, UK\\email:  griff@stats.ox.ac.uk.
\bigskip

\section*{Abstract} 
This paper studies new Lancaster characterizations of bivariate multivariate Poisson, negative binomial and normal distributions which have diagonal expansions in multivariate orthogonal polynomials.
The characterizations extend classical Lancaster characterizations of bivariate 1-dimensional distributions.
Multivariate Poisson-Charlier, Meixner and Hermite-Chebycheff orthogonal polynomials, used in the characterizations, are constructed from classical 1-dimensional orthogonal polynomials and multivariate Krawtchouk polynomials.
New classes of transition functions of discrete and continuous time Markov chains with these polynomials as eigenfunctions are characterized. The characterizations obtained belong to a class of mixtures of multi-type birth and death processes with fixed 
multivariate Poisson or multivariate negative binomial stationary distributions.
\medskip

\noindent
\emph{Keywords:}
Lancaster distributions; multivariate Krawtchouk polynomials; multivariate Poisson-Charlier polynomials; multivariate Meixner polynomials; multivariate Hermite-Chebycheff polynomials; multivariate birth and death processes.

\section{Introduction}
%
A bivariate random vector $(X,Y)$ with marginal distributions $f(x)$ and $g(y)$ has a Lancaster probability distribution expansion $p(x,y)$ if 
\begin{equation}
p(x,y) = f(x)g(y)
\Bigl \{ 1 + \sum_{n\geq 1}\rho_nP_n(x)Q_n(y)\Bigr \},
\label{L:0}
\end{equation}
for $x$ and $y$ in the support of $X$ and $Y$, where $\{P_n(x)\}_{n=0}^\infty$, $\{Q_n(y)\}_{n=0}^\infty$
are orthonormal bases on $f$ and $g$, with $P_0=Q_0\equiv 1$. $\{\rho_n\}_{n=0}^\infty$ ($\rho_0 = 1$) is a correlation sequence between the two sets of orthogonal functions and satisfies
$
\mathbb{E}\big [P_m(X)Q_n(Y)\big ] = \delta_{mn}\rho_n.
$
The problem of characterizing correlation sequences $\{\rho_n\}$, for given orthonormal functions, such that a bilinear series  (\ref{L:0}) is a non-negative sum and thus a bivariate probability distribution has become known as a \emph{Lancaster problem} after Lancaster and colleagues who studied such bivariate distributions \citep{E1964,G1969,K1996,K1998,L1969,L1975}. Usually $f=g$ and $(X,Y)$ is exchangeable in these characterizations.
$X$ and $Y$ may be 1-dimensional or higher dimensional random variables.
\textcolor{black}{
The orthogonal polynomials used in these characterizations are from the Meixner class and have a generating function of the form
\begin{equation}
G(t,x) = h(t)e^{xu(t)}=\sum_{n=0}^\infty P_n(x)t^n/n!
\label{mclass:0}
\end{equation}
where $\{P_n(x)\}_{n=0}^\infty$ are orthogonal polynomials, $h(t)$ is a power series in $t$ with $h(0)=1$ and $u(t)$ is a power series with $u(0)=0$ and $u^\prime(0) \ne 0$. \citet{M1934} characterizes the class of weight functions and orthogonal polynomials with the generating function
(\ref{mclass:0}). They are: Binomial, Krawtchouk polynomials; Normal, Hermite-Chebycheff polynomials; Poisson, Poisson-Charlier polynomials; Poisson, Poisson-Charlier polynomials; Gamma, Laguerre polynomials; and lastly a hypergeometric weight function of the form
\[
|\Gamma(\lambda - ix)|^2e^{(2\phi - \pi)x}, \> -\infty < x < \infty,
\]
with Meixner-Pollaczek polynomials.
References to the Meixner class are \citet{M1934,E1964,L1975}. The Meixner polynomials are also included in the Sheffer polynomials, see for example \citet{S2000}.
}

%

%
%
A close connection exists between spectral expansions of transition functions of Markov chains and Lancaster expansions.
A discrete time reversible Markov chain can be constructed with transition functions 
\[
p(y\mid x) = f(y)
\Bigl \{ 1 + \sum_{n\geq 1}\rho_nP_n(x)P_n(y)\Bigr \}.
\]
The eigenvalues in this Markov chain are $\{\rho_n\}_{n=0}^\infty$, and eigenfunctions $\{P_n(y)\}_{n=0}^\infty$, satisfy
\[
\mathbb{E}\big [P_n(Y)\mid X=x\big ] = \rho_nP_n(x).
\]
The $k$-step transition functions are 
\[
p^{(k)}(y\mid x) = f(y)\Bigl \{ 1 + \sum_{n\geq 1}\rho_n^kP_n(x)P_n(y)\Bigr \}.
\]
A continuous time reversible Markov chain can be constructed as a Poisson imbedding of the discrete time Markov chain, whose transition functions are, for $t \geq 0$,
\begin{eqnarray}
&&\sum_{k=0}^\infty e^{-\lambda t}\frac{(\lambda t)^k}{k!}p^{(k)}(y\mid x)
\nonumber \\
&&=
f(y)\Bigl \{ 1 + \sum_{n\geq 1}e^{-\lambda t(1-\rho_n)}P_n(x)P_n(y)\Bigr \}.
\label{Poissonimbed:0}
\end{eqnarray}
\citet{B1954} shows that the most general continuous time reversible transition functions with stationary distribution $f(y)$ and eigenvectors $\{P_n(y)\}$
have eigenvalues of the form
\[
\rho_n(t) = e^{-\lambda t(1-\rho_n)}
\]
or a more general limit form obtained when $\lambda \to \infty$, $\rho_n \to 1$, with $\lambda (1-\rho_n)$ convergent. 

In the  gamma, Poisson, and negative binomial classes with exchangeability and orthogonal polynomial bases, where the random variables are non-negative, a necessary and sufficient condition that (\ref{L:0}) is a probability distribution is
\begin{equation}
\rho_n = \int_0^1z^n\varphi (dz),
\label{sequences}
\end{equation}
where $\varphi$ is a probability measure on $[0,1]$
\citep{G1969,K1996,K1998,S1968}.
The class of bivariate distributions is a convex set with extreme correlation sequence points $\{z^n\}$ for $z \in [0,1]$. $X$ and $Y$ are independent if $z=0$ and $X = Y$ if $z=1$.
The bivariate Poisson characterization is equivalent to 
\begin{equation}
X = U + V,\> Y=W + V,
\label{PREIC}
\end{equation}
where $U,V,W$ are independent Poisson random variables with means $\mu (1-Z)$, $\mu Z$, $\mu(1-Z)$ conditional on a random variable $Z$ with distribution $\varphi$ of (\ref{sequences}).
In the normal class 
\begin{equation}
\rho_n = \int_{-1}^1z^n\varphi (dz),
\label{nsequences}
\end{equation}
where $\varphi$ is a probability measure on $[-1,1]$ \citep{SB1967}. 
The extreme sequence when $z=0$ corresponds to independence of $X$ and $Y$ and $z=\pm1$ corresponds to $X=\pm Y$.
The characterization is equivalent to $(X,Y)$ having  a standard bivariate normal distribution with correlation $Z$, conditional on $Z$, where $Z$ has distribution $\varphi$ of (\ref{nsequences}).
For a general introduction to Lancaster expansions see \citet{L1969,K1996}. 
Bochner characterizations of continuous time Markov chains with polynomial eigenfunctions and stationary distributions which are Poisson, negative binomial, gamma and  normal are studied in \citet{G2009}.
\citet{E1969} characterized the correlation sequences $\{\rho_n\}^N_{n=0}$ in a bivariate binomial $(N,p\geq 1/2)$ distribution as having a representation
$\rho_n = \mathbb{E}\big [Q_n(Z)\big ]$, 
where $\{Q_n\}$ are Krawtchouk polynomials scaled so that $Q_n(0) = 1$ and $Z$ is a random variable on $0,1,\ldots,N$.
This characterization with $X$ having finite support is distinct from the moment characterizations (\ref{sequences}) and (\ref{nsequences}). It relies on a property of the Krawtchouk polynomials that there exists a random variable $W$ on $1,\ldots, N$ with distribution $\varphi_{xy}(w)$ such that for fixed $x,y$ and $r=0,\ldots, N$
\begin{equation}
Q_r(x)Q_r(y) = \mathbb{E}_{\varphi_{xy}}\Bigl [Q_r(W)\Bigr ].
\label{duplicate:0}
\end{equation}
\citet{DG2012} study the Eagleson characterization furthur and find another characterization in terms of bivariate binomial distributions where there are $N$ pairs of trials with random correlations. A nice connection is made with generalized Ehrenfest urn models.

Let $\{p_j\}_{j=1}^d$ be a discrete probability distribution on $d$ points. $\{u^{(l)}\}_{l=0}^{d-1} (\equiv \bm{u})$ will denote a complete set of orthogonal functions with
$u^{(0)} \equiv 1$, such that for $k,l=0,1,\ldots, d-1$, 
\begin{equation}
\sum_{j=1}^du_j^{(k)}u_j^{(l)}p_j = \delta_{kl}a_k.
\label{basic:0}
\end{equation}
This notation for orthogonal functions with a discrete state space follows that of Lancaster.
$\bm{u}$ satisfies a \emph{hypergroup} property if
\begin{equation}
u_j^{(r)}u_k^{(r)} = \sum_{l=1}^db_{jk}(l)u_l^{(r)},
\label{dhyper:0}
\end{equation}
where $\{b_{jk}(l)\}_{l=1}^d$ is a probability distribution for each $j,k = 1,\ldots ,d$. The formula (\ref{duplicate:0}) is a particular case satisfying the hypergroup property.
We supppose that the orthogonal functions can be scaled so that $u_d^{(r)}=1$, $r=0,\ldots ,d-1$. 
(The specific lower index $d$ is chosen for convenience only.) (\ref{dhyper:0}) is equivalent to 
\begin{equation} 
\mathfrak{s}(j,k,l) =\sum_{r=1}^da_r^{-1}u_j^{(r)}u_k^{(r)}u_l^{(r)}\geq 0,\>j,k,l=1,\ldots,d
\label{dhyper:1}
\end{equation}
where $b_{jk}(l)=p_l\mathfrak{s}(j,k,l)$. If an orthogonal basis forms a hypergroup then a Lancaster characterization is that
\begin{equation}
p_ip_j\big \{1 + \sum_{r=1}^{d-1}\rho_ra_r^{-1}u_i^{(r)}u_j^{(r)}\big \},\>i,j = 1,\ldots ,d
\label{dLanc:0}
\end{equation}
is non-negative and therefore a bivariate probability distribution if and only if
$\rho_r = \sum_{i=1}^dc_iu_i^{(r)}$ for a probability distribution $\{c_i\}_{i=1}^d$.
There is a general theory of orthogonal functions which have the hypergroup property for which see \citet{BH2008} and examples of orthogonal functions with this property in \citet{DG2014}.

In this paper the orthogonal polynomials in Lancaster expansions are  multivariate extensions of the 1-dimensional orthogonal polynomials in the Meixner class and Krawtchouk polynomials. \citet{G1971} and \citet{DG2014} construct multivariate Krawt\-chouck polynomials orthogonal on the multinomial distribution (\ref{multinomial:0}) and study their 
properties. Recent representations and derivations of orthogonality of these polynomials are in \citet{GVZ2013,GR2011,I2012,M2011}.
The multivariate Krawtchouk polynomials are eigenfunctions in classes of reversible composition Markov chains which have multinomial stationary distributions \citep{ZL2009}. \textcolor{black}{They also occur naturally in diagonalizing the joint distribution of marginals in a contingency table with a fixed number of entries (Section 2.2).
} \citet{G1975,I2012a,GMVZ2014} construct and study multivariate Meixner polynomials, orthogonal on the multivariate Meixner distribution (\ref{Meixner:1}). \citet{GMVZ2014a} construct multivariate Poisson-Charlier polynomials on a $d$-dimensional Poisson product distribution. These polynomials are also mentioned briefy as limits from the multivariate Krawchouk and Meixner polynomials in \cite{G1971,G1975}. The three multivariate polynomial types are linked through the multivariate Krawtchouk polynomials. Authors emphasise different approaches to the multivariate orthogonal polynomials. \citeauthor{DG2014}'s approach is probabilistic and directed to Markov chain applications; \citeauthor{I2012}'s approach is via Lie groups; and \citeauthor{GVZ2013}'s physics approach is as matrix elements of group representations on oscillator states. 
\cite{X2013} studies discrete multivariate orthogonal polynomials which have a triangular construction of products of 1-dimensional orthogonal polynomials. These are particular cases of the polynomials studied in this paper \citep{DG2012}. 

The multivariate Krawtchouk, Poisson-Charlier, Meixner and Hermite-Cheby\-cheff polynomials considered in this paper all have an elementary basis $\bm{u}$ in their construction; so notation for the polynomials includes $\bm{u}$ as a parameter, for example $Q_{\bm{n}}(\bm{X};\bm{u})$ for orthogonal polynomials on the multinomial. A simple way to think of the role $\bm{u}$ plays is that the orthogonal polynomials are constructed from powers of the linear forms $\{\sum_{j=1}^du^{(l)}_jX_j\}_{l=1}^{d-1}$. The index $\bm{n}$ refers to the highest degree in these linear forms, and there is only one term of highest degree $\bm{n}$.
\citet{DG2014} show that the multivariate Krawtchouk polynomials satisfy a hypergroup property if and only if their elementary basis $\bm{u}$ does. The multivariate Poisson-Charlier polynomials are orthogonal on the product distribution of $d$ 1-dimensional Poisson distributions with means $\bm{\mu}$. The elementary basis is orthogonal on $\bm{p}=\bm{\mu}/|\bm{\mu}|$. The multivariate Meixner polynomials are orthogonal on a distribution obtained as a multinomial mixture, with the number of trials having a negative binomial distribution.
The multivariate Hermite-Chebycheff polynomials are orthogonal on the product distribution of $d$ 1-dimensional Normal distributions with means $\bm{0}$ and variances $\bm{\tau}$.
The elementary basis $\bm{u}$ is orthogonal on $\bm{p}=\bm{\tau}/|\bm{\tau}|$. We use the notation that for a $d$-dimensional vector 
$\bm{z}$, $|\bm{z}| = z_1 + \cdots + z_d$. 
\textcolor{black}{
\citet{T1989,T1991} constructs multivariate orthogonal and biorthogonal polynomials on the multinomial, multivariate Poisson and Meixner distributions. These polynomials do not have a general basis $\bm{u}$ included in them.
}

This paper studies new Lancaster characterizations of bivariate multivariate Poisson, negative binomial and normal distributions and associated transition functions in Markov chains which have diagonal expansions in multivariate orthogonal polynomials.
%
The characterizations obtained in this paper have a similarity to the 1-dimensional ones, but are more difficult to obtain. In the Poisson case $\bm{X}$ and $\bm{Y}$ have a structure of random elements in common with random means, but fixed marginal means; in the Normal case $\bm{X}$ and $\bm{Y}$ have a random cross-correlation matrix, \textcolor{black}{ which is a new extension of the \citet{SB1967} characterization and of classical canonical correlation theory.} A hypergroup property of the elementary basis $\bm{u}$ plays an important role in the characterizations.
\textcolor{black}{
It is assumed in the characterizations of bivariate multivariate Poisson and Meixner distributions in Theorems 3,5 and in the associated remarks that follow. Characterization of the eigenvalues $\rho_{\bm{n}}$ in the expansion of bivariate multivariate Poisson and Meixner distributions in Theorems 3 and 5 is interesting in that the hypergroup property is used together with a moment characterization arising from the infinite support of the marginal distributions.
The hypergroup property is not needed in the construction of the polynomials or in Theorems 2,7. 
}

Transition functions of discrete and continuous time Markov chains related to the characterizations are constructed. The Markov chains are mixtures of birth and death chains in the Poisson and negative binomial models. Since Lancaster characterizations are equivalent to characterizations of transition functions with a given stationary distribution and eigenfunctions, the results of this paper have an importance in Markov chain theory. 

A minimal number of references helpful in understanding this paper are:
\citet{K1996} for Lancaster distributions; \citet{G2009} for the connection with transition functions of Markov chains; \citet{DG2014} for orthogonal polynomials on the multinomial distribution; and \citet{BH2008} for the hypergroup property.
\section{Multinomial distribution}
\subsection{Orthogonal polynomials on the multinomial distribution}
A brief description of the properties of orthogonal polynomials on the multinomial that are subsequently needed in the paper is now made. More details are in \citet{G1971} and \citet{DG2014}.
Define a collection of orthogonal polynomials
$\big \{Q_{\bm{n}}(\bm{X};\bm{u})\big \}$ with $\bm{n} = (n_1,\ldots n_{d-1})$
and $|\bm{n}| \leq N$ on the multinomial distribution
\begin{equation}
m(\bm{x};N,\bm{p}) = {N\choose \bm{x}}\prod_{j=1}^dp_j^{x_j},\>x_j \geq 0, \>j=1,\ldots, d,\> |\bm{x}|=N,
\label{multinomial:0}
\end{equation}
with $\{p_j\}_{j=1}^d$ a probability distribution,
as the coefficients of $w_1^{n_1}\cdots
w_{d-1}^{n_{d-1}}$ in the generating function
\begin{equation}
G(\bm{x},\bm{w}, \bm{u}) 
= \prod_{j=1}^d\Big (1 + \sum_{l=1}^{d-1}w_lu_j^{(l)}\Big )^{x_j},
\label{main_gf}
\end{equation}
where $\{u^{(l)}\}_{l=0}^{d-1}$ satisfies (\ref{basic:0}).
\textcolor{black}{
Often the parameters $N,\bm{p}$ are not included in the orthogonal polynomials notation, however sometimes for clarity in this paper they are included so 
$Q_{\bm{n}}(\bm{X};N,\bm{p},\bm{u})\equiv Q_{\bm{n}}(\bm{X};\bm{u})$.
}
It is straightfoward to show, by using the generating function, that
\begin{equation}
\mathbb{E}\Big [Q_{\bm{m}}(\bm{X}; \bm{u})Q_{\bm{n}}(\bm{X};\bm{u})\Big ] = \delta_{\bm{m}\bm{n}}{N\choose \bm{n},N-|\bm{n}|}.
\label{normalizing}
\end{equation}
Let $Z_1,\ldots ,Z_N$ be independent identically distributed random
variables such that
\[
P(Z=k) = p_k, \>k=1,\ldots ,d.
\]
Then with 
\[
X_i = |\{Z_k: Z_k=i, k=1,\ldots, d\}|,
\]
\begin{equation}
G(\bm{X},\bm{w}, \bm{u}) 
= \prod_{k=1}^N\Big (1 + \sum_{l=1}^{d-1}w_lu_{Z_k}^{(l)}\Big ).
\label{symfnrep}
\end{equation}
From (\ref{symfnrep})
\begin{equation}
Q_{\bm{n}}(\bm{X};\bm{u}) = \sum_{\{A_l\}}\prod_{k_1\in A_1}
u_{Z_{k_1}}^{(1)}
\cdots \prod_{k_{d-1}\in A_{d-1}}
u_{Z_{k_{d-1}}}^{(d-1)},
\label{partitionrep}
\end{equation}
where summation is over all partitions of subsets of $\{1,\ldots
,N\}$, $\{A_l\}$ such that $|A_l| = n_l$, $l = 1,\ldots ,d-1$. 
That is, the orthogonal polynomials are symmetrized orthogonal functions in the tensor product set
\textcolor{black}{
\[
\bigotimes_{k=1}^N\big \{1,u^{(i)}_{Z_k}\big \}_{i=1}^{d-1}
= \big \{u^{(i_1)}_{Z_1}u^{(i_2)}_{Z_2}\cdots u^{(i_N)}_{Z_N}\big \}_{i_1,\ldots, i_N=0}^{d-1},
\]
indexed by counting the number of elements in the set $\{i_1,\ldots,i_N\}$ which are $1,2,\ldots,d-1$. Recall that $u^{(0)}_{Z_l}=1$, $l=1,\ldots,N$. }
The orthogonal polynomials could equally well be defined by (\ref{partitionrep}) and the generating function (\ref{main_gf}) deduced.
%
Let
\[
U_l = \sum_{k=1}^Nu_{Z_k}^{(l)} = \sum_{j=1}^du_j^{(l)}X_j
,\> l =1,\ldots ,d-1.
\]
$Q_{\bm{n}}(\bm{X};\bm{u})$ is a polynomial of degree $|\bm{n}|$ in $(U_1,\ldots ,U_{d-1})$
whose only term of maximal degree $|\bm{n}|$ is $\prod_1^{d-1}U_k^{n_k}$.
The construction of these polynomials is via a generating function approach (\ref{main_gf}) in \citet{G1971} and via symmetrized orthogonal functions (\ref{partitionrep}) in \citet{DG2014} rather than by a successive Gram-Schmidt orthogonalization approach on products of powers of 
$(U_1,\ldots ,U_{d-1})$. 
It is natural to wonder how the ordering of the polynomials by $\bm{n}$ would be in a Gram-Schmidt construction. 
The ordering is that a polynomial with index $\bm{n}$ preceeds a polynomial with index $\bm{n}^\prime$ if $|\bm{n}| < |\bm{n}^\prime|$, 
but if  $|\bm{n}| = |\bm{n}^\prime|$, the ordering in the Gram-Schmidt construction is arbitary because $Q_{\bm{n}}(\bm{X};\bm{u})$ is orthogonal to $\prod_1^{d-1}U_k^{n^\prime_k}$ if $\bm{n}\ne \bm{n}^\prime$. 

\textcolor{black}{
The multivariate Krawtchouk, Poisson, Meixner and Hermite-Chebycheff polynomials considered in this paper have a general unified construction and are unique within their classes for a given basis $\bm{u}$ as orthogonal polynomials in $(U_0,U_1,\ldots,U_{d-1})$ with single terms of highest degree. $U_0$ is the sum of the variables within a multivariate vector and does not play a role in the multivariate Krawtchouk polynomial degree. In the multivariate Poisson and Meixner distributions the conditional distribution of $\bm{X}$ given the sum of the elements, $|\bm{X}|$, has a multinomial distribution with parameter $N=|\bm{X}|$. The form of the orthogonal polynomials on these distributions is then a product of an orthogonal polynomial on the multinomial with $N=|\bm{X}|$ times an orthogonal polynomial on $|\bm{X}|$ which is either a 1-dimensional Poisson-Charlier polynomial or Meixner polynomial. There is a generality in these orthogonal polynomials, however the exact details of them need working through case by case. This is typical in papers dealing with the 1-dimensional Meixner class.
}

\subsection{Marginals in a contingency table}
The multivariate Krawtchouk polynomials diagonalize the joint distribution of marginal counts in a contingency table. Suppose $N$ observations are placed independently into an $r\times c$ table ($r \leq c$) with the probability of an observation falling in cell $(i,j)$ being $p_{ij}$. Denote the marginal distributions as $p_i^r = \sum_{j=1}^cp_{ij}$ and $p_j^c = \sum_{i=1}^rp_{ij}$. Let $p_{ij}$ have a Lancaster expansion (which is always possible, even for non-exchangeable $p_{ij}$)
\begin{equation}
p_{ij} = p^r_ip^c_j\big \{1 + 
\sum_{k=1}^{r-1}\rho_k u^{(k)}_iv^{(k)}_j\big \},
\label{elLancaster:0}
\end{equation}
where $\bm{u}$ and $\bm{v}$ are orthonormal function sets on $\bm{p}^r$ and $\bm{p}^c$. $\bm{u}$ is an orthonormal basis and if $r < c$ complete $\{v^{(k)}\}_{k=0}^{r-1}$ to a basis $\{v^{(k)}\}_{k=0}^{c-1}$.  Let $N_{ij}$ be the number of observations falling into cell $(i,j)$ and 
$X_i = \sum_{j=1}^cN_{ij}$, $Y_j = \sum_{i=1}^rN_{ij}$. $\bm{X}$ and $\bm{Y}$ are the marginal counts. Then
\textcolor{black}{
\begin{eqnarray}
&&P\big (\bm{X}=\bm{x},\bm{Y}=\bm{y}\big ) = 
m(\bm{x};N,\bm{p}^r)m(\bm{y};N,\bm{p}^c)\nonumber \\
&&~~\times
\Big \{1 + \sum_{\bm{n}}\rho_1^{n_1}\ldots \rho_{r-1}^{n_{r-1}}{N\choose \bm{n}}^{-1}Q_{\bm{n}}(\bm{x};N,\bm{p}^r,\bm{u})Q_{\bm{n}^\ast}(\bm{y};N,\bm{p}^c,\bm{v})\Big \},
\nonumber \\
\label{contingency:0}
\end{eqnarray}
}
where $\bm{n}^\ast = (\bm{n},\bm{0}_{c-r})$. \citet{AG1935} showed (\ref{contingency:0}) for a $2\times 2$ table with orthogonal polynomials the usual 1-dimensional Krawtchouk polynomials and \citet{G1971} for $r\times c$ tables.
\section{Multivariate Poisson}
\subsection{Multivariate Poisson-Charlier polynomials}
Let $X$ be a Poisson random variable with mean $\lambda$.  The Poisson-Charlier orthogonal polynomials on $X$, $\{C_n(X;\lambda)\}_{n=0}^\infty$,  have a generating function
\begin{equation}
\sum_{n=0}^\infty C_n(X;\lambda)\frac{z^n}{n!}
 = e^z\Big (1-\frac{z}{\lambda}\Big )^X.
 \label{PC:0}
\end{equation}
The orthogonality relationship is
\[
\mathbb{E}\big [C_m(X;\lambda)C_n(X;\lambda)\big ] = \delta_{mn}\frac{m!}{\lambda^m}.
\]
Let $\bm{X}=(X_1,\ldots ,X_d)$ be independent Poisson random variables with means $\bm{\mu}=(\mu_1,\ldots ,\mu_d)$. The probability distribution of $\bm{X}$ is
\begin{equation}
P(\bm{x};\bm{\mu}) = \prod_{i=1}^de^{-\mu_i}\frac{\mu_i^{x_i}}{x_i!}.
\label{MVPD:0}
\end{equation}
A natural set of multivariate Poisson-Charlier orthogonal polynomials is the product set $\bigotimes_{j=1}^d\big \{C_{n_j}(X_j;\mu_j) \big \}$. Another set is constructed by noting that the sum $|\bm{X}|$ is a Poisson random variable with mean $|\bm{\mu}|$ and the conditional distribution of
$\bm{X}$ given $|\bm{X}|$ is multinomial with parameters $(N=|\bm{X}|,\bm{p}=\bm{\mu}/|\bm{\mu}|)$. The next theorem is a new probabilistic construction.
\begin{thm}\label{PCDEF:0}
A set of multivariate Poisson-Charlier polynomials orthogonal on $\bm{X}$ is
\begin{equation}
C_{\bm{n}}(\bm{X};\bm{\mu},\bm{u}) = 
{(n_0)!}^{-1}C_{n_0}(|\bm{X}|- |\bm{n_1}|;|\bm{\mu}|)
|\bm{\mu}|^{-|\bm{n_1}|}Q_{\bm{n}_1}(\bm{X};|\bm{X}|,\bm{p},\bm{u}),
\label{MVPC:0}
\end{equation}
where $\bm{n} \in \mathbb{Z}_+^d$, $\bm{n}_1 = (n_1,\ldots, n_{d-1})$, and
$Q_{\bm{n}_1}(\bm{X};|\bm{X}|,\bm{p},\bm{u})$ are multivariate Krawtchouk polynomials, orthogonal on the multinomial random variable $\bm{X}$ given $|\bm{X}|$. $\bm{u}$ is an orthogonal basis satisfying (\ref{basic:0}).
The orthogonality relationship is
\begin{equation}
\mathbb{E}\Big [C_{\bm{m}}(\bm{X};\bm{\mu},\bm{u})C_{\bm{n}}(\bm{X};\bm{\mu},\bm{u})\Big ]
= \delta_{\bm{m}\bm{n}}|\bm{\mu}|^{-|\bm{n}|}\prod_{j=0}^{d-1}\frac{a_j^{n_j}}{n_j!}
\coloneqq  \delta_{\bm{m}\bm{n}}h_{\bm{n}}(|\bm{\mu}|,\bm{u})^{-1}.
\label{MVPC:2}
\end{equation}
The polynomials (\ref{MVPC:0}) are generated by the coefficients of
$w_0^{n_0}\cdots w_{d-1}^{n_{d-1}}$ in 
\begin{equation}
G_{\text{PC}}(\bm{X},\bm{w},\bm{u})
= e^{w_0}\prod_{i=1}^d\Big (1 - |\bm{\mu}|^{-1}w_0 + |\bm{\mu}|^{-1}\sum_{j=1}^{d-1}u_i^{(j)}w_j\Big )^{X_i}.
\label{MVPC:1}
\end{equation}
\end{thm}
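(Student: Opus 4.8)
The plan is to prove the generating-function identity \eqref{MVPC:1} first, and then to read off both the explicit formula \eqref{MVPC:0} and the orthogonality relation \eqref{MVPC:2} from it; all manipulations are identities between polynomials in $\bm{X}$ whose coefficients are formal power series in $\bm{w}$, so no convergence question arises. Write $\mu=|\bm{\mu}|$ and $v_i=\sum_{l=1}^{d-1}u_i^{(l)}w_l$. Factoring the $i$-th bracket in \eqref{MVPC:1} as $1-\mu^{-1}w_0+\mu^{-1}v_i=(1-\mu^{-1}w_0)\bigl(1+\tfrac{\mu^{-1}v_i}{1-\mu^{-1}w_0}\bigr)$ gives
\[
G_{\text{PC}}(\bm{X},\bm{w},\bm{u})=e^{w_0}(1-\mu^{-1}w_0)^{|\bm{X}|}\prod_{i=1}^d\Bigl(1+\sum_{l=1}^{d-1}\frac{\mu^{-1}w_l}{1-\mu^{-1}w_0}\,u_i^{(l)}\Bigr)^{X_i}.
\]
The product is the multinomial generating function \eqref{main_gf} for $\bm{X}$ given $|\bm{X}|$, evaluated at $w_l\mapsto\mu^{-1}w_l/(1-\mu^{-1}w_0)$, so by \eqref{main_gf} it equals $\sum_{\bm{n}_1}Q_{\bm{n}_1}(\bm{X};|\bm{X}|,\bm{p},\bm{u})\,\mu^{-|\bm{n}_1|}(1-\mu^{-1}w_0)^{-|\bm{n}_1|}\prod_{l=1}^{d-1}w_l^{n_l}$. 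Substituting and combining the powers of $(1-\mu^{-1}w_0)$ leaves, as the coefficient of $\prod_{l=1}^{d-1}w_l^{n_l}$, the quantity $\mu^{-|\bm{n}_1|}Q_{\bm{n}_1}(\bm{X};|\bm{X}|,\bm{p},\bm{u})\cdot e^{w_0}(1-\mu^{-1}w_0)^{|\bm{X}|-|\bm{n}_1|}$; expanding the last factor by the one-dimensional Poisson--Charlier generating function \eqref{PC:0} with argument $|\bm{X}|-|\bm{n}_1|$ and collecting the coefficient of $w_0^{n_0}$ reproduces precisely $C_{\bm{n}}(\bm{X};\bm{\mu},\bm{u})$ of \eqref{MVPC:0}. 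This proves \eqref{MVPC:1}, shows the two descriptions of $C_{\bm{n}}$ coincide, and (since $G_{\text{PC}}$ is a polynomial in $\bm{X}$) confirms that each $C_{\bm{n}}$ is a polynomial in $\bm{X}$. The shift by $|\bm{n}_1|$ in the argument of $C_{n_0}$ is \emph{forced} by the $(1-\mu^{-1}w_0)^{-|\bm{n}_1|}$ produced by the Krawtchouk substitution; since $Q_{\bm{n}_1}(\bm{X};|\bm{X}|,\bm{p},\bm{u})$ vanishes unless $|\bm{n}_1|\le|\bm{X}|$, one never evaluates $C_{n_0}$ at a negative argument in practice.

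For orthogonality I would compute $\mathbb{E}\bigl[G_{\text{PC}}(\bm{X},\bm{w},\bm{u})\,G_{\text{PC}}(\bm{X},\bm{w}',\bm{u})\bigr]$ directly from independence of the $X_i$, using $\mathbb{E}[t^{X_i}]=e^{\mu_i(t-1)}$ and $\mu_i=\mu p_i$. If $s_i$ is the product of the two $i$-th brackets, the expectation is $e^{w_0+w_0'}\exp\bigl(\mu\sum_i p_i s_i-\mu\bigr)$; expanding $s_i$ and using $\sum_i p_i u_i^{(l)}=0$ for $l\ge1$ and $\sum_i p_i u_i^{(l)}u_i^{(k)}=\delta_{lk}a_l$ from \eqref{basic:0} (with $a_0=1$), one gets $\mu\sum_i p_i s_i-\mu=-(w_0+w_0')+\mu^{-1}\bigl(w_0w_0'+\sum_{l=1}^{d-1}a_lw_lw_l'\bigr)$; the $e^{w_0+w_0'}$ cancels and
\[
\mathbb{E}\bigl[G_{\text{PC}}(\bm{X},\bm{w},\bm{u})\,G_{\text{PC}}(\bm{X},\bm{w}',\bm{u})\bigr]=\exp\Bigl(\mu^{-1}\sum_{j=0}^{d-1}a_jw_jw_j'\Bigr)=\sum_{\bm{n}}\Bigl(\mu^{-|\bm{n}|}\prod_{j=0}^{d-1}\frac{a_j^{n_j}}{n_j!}\Bigr)\prod_{j=0}^{d-1}w_j^{n_j}(w_j')^{n_j}.
\]
Matching the coefficient of $\prod_{j=0}^{d-1}w_j^{m_j}(w_j')^{n_j}$ against $\sum_{\bm{m},\bm{n}}\mathbb{E}[C_{\bm{m}}C_{\bm{n}}]\prod_{j=0}^{d-1}w_j^{m_j}(w_j')^{n_j}$ gives the diagonal relation \eqref{MVPC:2}.

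An equivalent route to \eqref{MVPC:2} is to condition on $N=|\bm{X}|$: given $N$ the Krawtchouk factor contributes its multinomial norm and kills all terms with $\bm{m}_1\ne\bm{n}_1$, and then summing over $N\sim\mathrm{Poisson}(\mu)$ after the substitution $k=N-|\bm{n}_1|$ collapses the remaining series to the one-dimensional Poisson--Charlier relation $\mathbb{E}[C_{m_0}(K;\mu)C_{n_0}(K;\mu)]=\delta_{m_0n_0}\,n_0!/\mu^{n_0}$ for $K\sim\mathrm{Poisson}(\mu)$. The step I expect to demand the most care, in either approach, is the bookkeeping of the index shift $|\bm{X}|\mapsto|\bm{X}|-|\bm{n}_1|$ together with the compensating powers $(1-\mu^{-1}w_0)^{\pm|\bm{n}_1|}$; once that is tracked cleanly, the rest is routine manipulation of the two ingredient generating functions \eqref{main_gf} and \eqref{PC:0} and the orthogonality \eqref{basic:0} of $\bm{u}$.
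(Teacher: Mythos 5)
Your proposal is correct and follows essentially the same route as the paper: it extracts the coefficient of $w_1^{n_1}\cdots w_{d-1}^{n_{d-1}}$ from $G_{\text{PC}}$ via the multinomial generating function (\ref{main_gf}) to obtain the intermediate expression $e^{w_0}(1-|\bm{\mu}|^{-1}w_0)^{|\bm{X}|-|\bm{n}_1|}|\bm{\mu}|^{-|\bm{n}_1|}Q_{\bm{n}_1}$, reads off (\ref{MVPC:0}) using (\ref{PC:0}), and then proves orthogonality by computing $\mathbb{E}[G_{\text{PC}}(\bm{X},\bm{w},\bm{u})G_{\text{PC}}(\bm{X},\bm{w}',\bm{u})]=\exp\{|\bm{\mu}|^{-1}\sum_j a_jw_jw_j'\}$ exactly as in (\ref{Orthogcalc:0}). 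Your version simply spells out the factorization and index-shift bookkeeping that the paper leaves implicit, and the alternative conditioning-on-$|\bm{X}|$ argument you sketch is the ``direct argument'' the paper alludes to.
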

\begin{proof}
To show that the generating function (\ref{MVPC:1}) is correct it is sufficient to show that it generates the orthogonal polynomials (\ref{MVPC:0}) as coefficients  of $w_0^{n_0}\cdots w_{d-1}^{n_{d-1}}$. The coefficient of 
$w_1^{n_1}\cdots w_{d-1}^{n_{d-1}}$ in  (\ref{MVPC:1}) is
\begin{equation}
e^{w_0}\Big (1-|\bm{\mu}|^{-1}w_0\Big )^{|\bm{X}|-|\bm{n}_1|}
|\bm{\mu}|^{-|\bm{n}_1|}Q_{\bm{n}_1}(\bm{X};|\bm{X}|,\bm{p},\bm{u})
\label{intermediate:0}
\end{equation}
so (\ref{MVPC:0}) holds from (\ref{intermediate:0}) and (\ref{PC:0}).
Orthogonality of the polynomials (\ref{MVPC:0}) can be shown by direct argument or using the generating function (\ref{MVPC:1}).
\begin{eqnarray}
&&\mathbb{E}\big [G_{\text{PC}}(\bm{X},\bm{z},\bm{u})G_{\text{PC}}(\bm{X},\bm{w},\bm{u})\big ]
\nonumber \\
&&=
\exp \Bigg \{
z_0+w_0+
\sum_{i=1}^d\mu_i\Big [
\Big (1 - |\bm{\mu}|^{-1}z_0 + |\bm{\mu}|^{-1}\sum_{j=1}^{d-1}u_i^{(j)}z_j\Big )
\nonumber \\
&&~~~~~~~~~~~~~~~~~~~~~~~~~~~~~\times\Big (1 - |\bm{\mu}|^{-1}w_0 + |\bm{\mu}|^{-1}\sum_{j=1}^{d-1}u_i^{(j)}w_j\Big )-1\Big ]\Bigg \}
\nonumber \\
&&=\exp \Big \{|\bm{\mu}|^{-1}\sum_{j=0}^{d-1}a_jz_jw_j\Big \}.
\label{Orthogcalc:0}
\end{eqnarray}
Now (\ref{MVPC:2}) follows as the coefficient of
 $z_0^{m_0}\cdots z_{d-1}^{m_{d-1}}w_0^{n_0}\cdots w_{d-1}^{n_{d-1}}$ in (\ref{Orthogcalc:0}).
\end{proof}
\begin{cor}\textcolor{black} {The transform of the multivariate Poisson-Charlier polynomials is
\begin{eqnarray}
C^*_{\bm{n}}(\bm{s};\bm{\mu},\bm{u})
&:=&
\mathbb{E}\Big [\prod_{j=1}^ds_j^{X_j}
C_{\bm{n}}(\bm{X};\bm{\mu},\bm{u}) \Big ]
\nonumber \\
&=&
\Big (\prod_{j=0}^{d-1}n_j!\Big )^{-1}
e^{\sum_{i=1}^d\mu_i(s_i-1)}
\Big ( 1 - S_0\Big )^{n_0}
\prod_{j=1}^{d-1}S_j^{n_j},
\label{transform:0}
\end{eqnarray}
where $S_j = \sum_{i=1}^dp_is_iu_i^{(j)}$.}
\end{cor}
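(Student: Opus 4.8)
The plan is to read the transform off the generating function (\ref{MVPC:1}). Since by Theorem~\ref{PCDEF:0} the polynomial $C_{\bm n}(\bm X;\bm\mu,\bm u)$ is the coefficient of $w_0^{n_0}\cdots w_{d-1}^{n_{d-1}}$ in $G_{\text{PC}}(\bm X,\bm w,\bm u)$, it suffices to compute the mixed generating function
$\Phi(\bm s,\bm w):=\mathbb{E}\big[\prod_{j=1}^d s_j^{X_j}\,G_{\text{PC}}(\bm X,\bm w,\bm u)\big]$
and then pick out the coefficient of $w_0^{n_0}\cdots w_{d-1}^{n_{d-1}}$. Interchanging $\mathbb{E}$ with the $\bm w$-expansion is harmless, since each coefficient $C_{\bm n}$ is a polynomial in $\bm X$ (hence integrable against $\prod_j s_j^{X_j}$) and the resulting power series in $\bm w$ is absolutely convergent near $\bm 0$, in the same spirit as the generating-function manipulations in the proof of Theorem~\ref{PCDEF:0}.

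First I would use the independence of $X_1,\dots,X_d$ together with the Poisson probability generating function $\mathbb{E}[t^{X_i}]=e^{\mu_i(t-1)}$, applied with $t=t_i:=s_i\big(1-|\bm\mu|^{-1}w_0+|\bm\mu|^{-1}\sum_{j=1}^{d-1}u_i^{(j)}w_j\big)$, to get
\[
\Phi(\bm s,\bm w)=e^{w_0}\prod_{i=1}^d\exp\!\Big\{\mu_i\Big[s_i\Big(1-|\bm\mu|^{-1}w_0+|\bm\mu|^{-1}\sum_{j=1}^{d-1}u_i^{(j)}w_j\Big)-1\Big]\Big\}.
\]
Then I would expand the exponent and collect terms, using $\mu_i=|\bm\mu|p_i$, $u_i^{(0)}\equiv 1$, and the abbreviation $S_j=\sum_{i=1}^d p_i s_i u_i^{(j)}$ (so that $S_0=\sum_{i=1}^d p_i s_i$). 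The $\bm w$-free part of the exponent is $\sum_i\mu_i s_i-|\bm\mu|=\sum_i\mu_i(s_i-1)$; the $w_0$-linear part, including the bare $e^{w_0}$, is $w_0(1-S_0)$; and the $w_j$-linear part is $S_j w_j$ for $j=1,\dots,d-1$. Hence
\[
\Phi(\bm s,\bm w)=e^{\sum_{i=1}^d\mu_i(s_i-1)}\;e^{w_0(1-S_0)}\prod_{j=1}^{d-1}e^{S_j w_j}.
\]

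Finally I would extract the coefficient of $w_0^{n_0}\cdots w_{d-1}^{n_{d-1}}$: from $e^{w_0(1-S_0)}$ it is $(1-S_0)^{n_0}/n_0!$, from $e^{S_j w_j}$ it is $S_j^{n_j}/n_j!$, and the prefactor $e^{\sum_i\mu_i(s_i-1)}$ carries through, which is precisely (\ref{transform:0}). There is no real obstacle here; the argument is a short probability-generating-function computation. The only step needing care is the "collect terms" bookkeeping — in particular recognizing that the $w_0$-coefficient in the exponent assembles into $1-S_0$ (the $+1$ coming from $e^{w_0}$, the $-S_0$ from $-|\bm\mu|^{-1}w_0\sum_i\mu_i s_i$), since a sign or index slip there is the easiest way to derail the computation.
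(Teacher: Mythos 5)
Your proposal is correct and is essentially identical to the paper's own proof: the paper also computes $\mathbb{E}\big[\prod_j s_j^{X_j}G_{\text{PC}}(\bm X,\bm w,\bm u)\big]$ via the Poisson probability generating function, simplifies the exponent to $\sum_i\mu_i(s_i-1)+w_0(1-S_0)+\sum_{j=1}^{d-1}S_jw_j$, and reads off the coefficient of $w_0^{n_0}\cdots w_{d-1}^{n_{d-1}}$. Your bookkeeping of the $w_0$-coefficient as $1-S_0$ matches the paper's line (\ref{transform:1a}) exactly.
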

%
\begin{proof}
\begin{eqnarray}
&&\mathbb{E}\Big [\prod_{j=1}^ds_j^{X_j}
G_{\text{PC}}(\bm{X},\bm{w},\bm{u})
\Big ]
\nonumber \\
&&= \exp \Bigg \{
w_0 + \sum_{i=1}^d\mu_i\Big [s_i\Big (1 - |\bm{\mu}|^{-1}w_0+|\bm{\mu}|^{-1}\sum_{j=1}^{d-1}u_i^{(j)}w_j\Big )-1\Big ]\Bigg \}
\nonumber \\
&&=
\exp \Bigg \{\sum_{i=1}^d\mu_i(s_i-1)
+w_0(1-S_0) + \sum_{j=1}^{d-1}S_jw_j 
\Bigg \}
\nonumber \\
\label{transform:1a}
\end{eqnarray}
Now (\ref{transform:0}) is the coefficient of $w_0^{n_0}\cdots w_{d-1}^{n_{d-1}}$ in (\ref{transform:1a}).
\end{proof}
\begin{rem}
A representation of a Poisson random vector $\bm{X}$ with independent entries and mean $\bm{\mu}$ is that
\[
\bm{X} = \bm{e}_{Z_1}+\bm{e}_{Z_2}+\cdots + \bm{e}_{Z_{|\bm{X}|}},
\]
where $\bm{e}_k$ is the $k$th unit vector and $\{Z_i\}_{i=1}^\infty$ is a sequence of independent random variables such that $P(Z_i = k) = p_k = \mu_k/|\bm{\mu}|$, $k = 1,\ldots,d$ and $|\bm{X}|$ is a Poisson random variable independent of the sequence with mean $|\bm{\mu}|$.
A symmetrized tensor product set, similar to (\ref{partitionrep})  with a random number of trials is then
\begin{eqnarray}
C_{\bm{n}}(\bm{X};\bm{\mu},\bm{u})
&=& {(n_0)!}^{-1}C_{n_0}(|\bm{X}|- |\bm{n_1}|;|\bm{\mu}|)
|\bm{\mu}|^{-|\bm{n_1}|}
\nonumber \\
&&\times \sum_{\{A_l\}}\prod_{k_1\in A_1}
u_{Z_{k_1}}^{(1)}
\cdots \prod_{k_{d-1}\in A_{d-1}}
u_{Z_{k_{d-1}}}^{(d-1)},
\label{Poisson:sym:0}
\end{eqnarray}
where summation is over all partitions of subsets of $\{1,\ldots
,|\bm{X}|\}$, $\{A_l\}$ such that $|A_l| = n_l$, $l = 1,\ldots ,d-1$. 
$C_{\bm{n}}(\bm{X};\bm{\mu},\bm{u})$ is a polynomial in the sums
$U_j=\sum_{i=1}^du^{(j)}_iX_i$ with a single leading term of degree $|\bm{n}|$ proportional to $U_0^{n_0}\cdots U_{d-1}^{n_{d-1}}$.
\end{rem}
\begin{rem}
It seems natural to include a constant function $u^{(0)}=1$ in the basis, thus including the Poisson-Charlier polynomials in $|\bm{X}|$ in the polynomial set, however it is possible to choose a basis $\bm{u}$ without a constant function. Then a generating function for the multivariate polynomials is
\begin{equation}
\exp \Big \{-\sum_{j=0}^{d-1}\sum_{i=1}^du_i^{(j)}p_iw_j\Big \}\prod_{i=1}^d\Big (1 + |\bm{\mu}|^{-1}\sum_{j=0}^{d-1}u_i^{(j)}w_j\Big )^{X_i}.
\label{general:gf:0}
\end{equation}
This generating function is in Section 11 of \citet{GMVZ2014a}, with a different notation.
\end{rem}
\begin{rem}
The polynomials $\{C_{\bm{n}}(\bm{X};\bm{\mu},\bm{u})\}$ can be obtained as a limit from a set of multivariate Krawtchouk polynomials. \cite{GMVZ2014a} show such a limit, which is also very briefly mentioned in \citet{G1971}. Let $\bm{X}^{(N)}$ be a multinomial $(N,\bm{p}^{(N)})$ random variable of dimension $d+1$ with 
$(p^{(N)}_i)_{i=1}^d = (N+|\bm{\mu}|)^{-1}\bm{\mu}$ and 
$p_{d+1}^{(N)} = (N+|\bm{\mu}|)^{-1}N$. Then from a classical limit theorem $(X_i^{(N)})_{i=1}^d$ converges to a Poisson random vector $\bm{X}$ with mean $\bm{\mu}$. 
Choose an orthogonal basis for the multivariate Krawtchouk polynomials of $\{v^{(j)}(N)\}_{j=0}^d$ with $v^{(0)}(N)=1$, and
\[
v_k^{(j)}(N) = \begin{cases}
|\bm{\mu}|^{-1} u^{(j)}_k,&k=1,\ldots d\\
 0,&k=d+1
 \end{cases}
\]
where $\{u^{(j)}\}$ is an orthogonal basis on $\bm{\mu}$.
Take
\[
v_k^{(d)}(N) =
\begin{cases}
-|\bm{\mu}|^{-1},&k=1,\ldots , d\\
N^{-1},&k=d+1.
\end{cases}
\]
The generating function for the multivariate Krawtchouk polynomials is
\begin{eqnarray}
&&\prod_{j=1}^{d+1}\Big (1 + \sum_{l=1}^{d}w_lv_j^{(l)}(N)\Big )^{x_j}
\nonumber \\
&&= 
\prod_{j=1}^{d}\Big (1 + \sum_{l=1}^{d-1}w_l|\bm{\mu}|^{-1}u_j^{(l)} - |\bm{\mu}|^{-1}w_{d}\Big )^{x_j}
\Big (1 + N^{-1}w_d\Big )^{N-\sum_1^dx_i}
\nonumber \\
&&\to
e^{w_d}\prod_{j=1}^{d}\Big (1 - |\bm{\mu}|^{-1}w_{d} +|\bm{\mu}|^{-1}\sum_{l=1}^{d-1}w_lu_j^{(l)} \Big )^{x_j}.
\label{plimit:0}
\end{eqnarray}
Change the indexing by setting $w_d$ to $w_0$, then (\ref{plimit:0}) is identical to (\ref{MVPC:1}). This shows that multivariate Krawtchouk polynomials with basis $\{v^{(l)}(N)\}$ converge to the multivariate Poisson-Charlier polynomials with basis 
$\bm{u}$.
\end{rem}
\subsection{Marginals in a Poisson array}
The distribution of the marginals in a Poisson array has a diagonal expansion in the multivariate Poisson-Charlier polynomials as a natural extension from a contingency table. The next theorem is new.
\begin{thm}\label{Thm 2}
Let $(X_{ij})$, $i=1,\ldots ,r$, $j=1,\ldots ,c$, $r \leq c$, be an $r\times c$ array of independent Poisson random variables with means $\bm{\mu} = (\mu_{ij})$.
 Let $p_{ij}= \mu_{ij}/|\bm{\mu}|$. Suppose there is a Lancaster expansion for $i=1,\ldots ,r$, $j=1,\ldots ,c$,
 \begin{equation}
 p_{ij} = p^r_{i\cdot}p^c_{\cdot j}
 \Big \{ 1 + \sum_{k=1}^{r-1}\rho_ku^{(k)}_iv^{(k)}_j\Big \}
 \label{Lancaster:0}
 \end{equation}
 where $\bm{u}$ and $\bm{v}$ are orthonormal functions on $\bm{p}^r$ and $\bm{p}^c$ respectively. 
 Denote the marginals of the table by 
 $\bm{X} = (X_{i\cdot})_{i=1}^r$,
  $\bm{Y} = (Y_{\cdot j})_{j=1}^c$ and their means by $\bm{\mu}_{\bm{x}}$, $\bm{\mu}_{\bm{y}}$.
 Then
 \begin{eqnarray}
 &&P\big (\bm{X}=\bm{x},\bm{Y}=\bm{y}\big ) =
 P(\bm{x};\bm{\mu}) P(\bm{y};\bm{\mu})
 \nonumber \\
 &&~~~
 \times
  \Big \{
 1 + \sum_{\bm{n};|\bm{n}|\geq 1}
 \rho_1^{n_1}\cdots \rho_r^{n_{r-1}}h_{\bm{n}}(|\bm{\mu}|)
C_{\bm{n}}(\bm{x};\bm{\mu}_{\bm{x}},\bm{u})
C_{\bm{n}^*}(\bm{y};\bm{\mu}_{\bm{y}},\bm{v})
\Big \},
\nonumber \\
\label{Table:1}
 \end{eqnarray}
 where $\bm{n} \in \mathbb{Z}_+^r$,
 $\bm{n}^* = (\bm{n},\bm{0}_{c-r})\in \mathbb{Z}_+^c$ and $h_{\bm{n}}(|\bm{\mu}|) = |\bm{\mu}|^{|\bm{n}|}\prod_{j=0}^{d-1}n_j!$.
 \end{thm}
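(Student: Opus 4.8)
\noindent\emph{Proof proposal.} The plan is to compute the joint probability generating function of $(\bm{X},\bm{Y})$ directly and then to read off (\ref{Table:1}) term by term using the transform formula (\ref{transform:0}). For $|s_i|\le1$ and $|t_j|\le1$ put $\phi(\bm{s},\bm{t})=\mathbb{E}\bigl[\prod_{i=1}^r s_i^{X_{i\cdot}}\prod_{j=1}^c t_j^{Y_{\cdot j}}\bigr]$. Since $X_{i\cdot}=\sum_j X_{ij}$ and $Y_{\cdot j}=\sum_i X_{ij}$ we have $\prod_i s_i^{X_{i\cdot}}\prod_j t_j^{Y_{\cdot j}}=\prod_{i,j}(s_i t_j)^{X_{ij}}$, and since the $X_{ij}$ are independent Poisson$(\mu_{ij})$ random variables,
\[
\phi(\bm{s},\bm{t})=\exp\Bigl\{\sum_{i,j}\mu_{ij}(s_i t_j-1)\Bigr\}.
\]
Setting $\bm{t}=\bm{1}$ (resp. $\bm{s}=\bm{1}$) shows $\bm{X}$ (resp. $\bm{Y}$) is a vector of independent Poissons with means the row (resp. column) sums $\bm{\mu}_{\bm{x}}$ (resp. $\bm{\mu}_{\bm{y}}$), so $P(\bm{X}=\bm{x})=P(\bm{x};\bm{\mu}_{\bm{x}})$ and $P(\bm{Y}=\bm{y})=P(\bm{y};\bm{\mu}_{\bm{y}})$ in the notation of (\ref{MVPD:0}).

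I would then substitute the Lancaster expansion $\mu_{ij}=|\bm{\mu}|\,p^r_{i\cdot}p^c_{\cdot j}\{1+\sum_{k=1}^{r-1}\rho_k u_i^{(k)}v_j^{(k)}\}$ of (\ref{Lancaster:0}) and introduce, with $u^{(0)}\equiv v^{(0)}\equiv1$, the linear forms $S_k=\sum_{i=1}^r p^r_{i\cdot}s_i u_i^{(k)}$ and $T_k=\sum_{j=1}^c p^c_{\cdot j}t_j v_j^{(k)}$, $k=0,1,\dots$ Orthonormality of $\bm{v}$ gives $\mu_{x,i}=|\bm{\mu}|p^r_{i\cdot}$, so $\sum_i\mu_{x,i}(s_i-1)=|\bm{\mu}|(S_0-1)$ and likewise $\sum_j\mu_{y,j}(t_j-1)=|\bm{\mu}|(T_0-1)$, while $\sum_{ij}\mu_{ij}=|\bm{\mu}|$ and $\sum_{ij}\mu_{ij}s_it_j=|\bm{\mu}|\bigl(S_0T_0+\sum_{k=1}^{r-1}\rho_kS_kT_k\bigr)$. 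Using $S_0T_0-1=(S_0-1)+(T_0-1)+(1-S_0)(1-T_0)$, the generating function factors as
\[
\phi(\bm{s},\bm{t})=e^{|\bm{\mu}|(S_0-1)}e^{|\bm{\mu}|(T_0-1)}\cdot e^{|\bm{\mu}|(1-S_0)(1-T_0)}\prod_{k=1}^{r-1}e^{|\bm{\mu}|\rho_kS_kT_k}.
\]
Expanding $e^{|\bm{\mu}|(1-S_0)(1-T_0)}=\sum_{n_0\ge0}\frac{|\bm{\mu}|^{n_0}}{n_0!}(1-S_0)^{n_0}(1-T_0)^{n_0}$ and each $e^{|\bm{\mu}|\rho_kS_kT_k}=\sum_{n_k\ge0}\frac{(|\bm{\mu}|\rho_k)^{n_k}}{n_k!}S_k^{n_k}T_k^{n_k}$ and multiplying out over $\bm{n}=(n_0,\dots,n_{r-1})\in\mathbb{Z}_+^r$, each summand splits into the $\bm{s}$-factor $e^{|\bm{\mu}|(S_0-1)}(1-S_0)^{n_0}\prod_{k=1}^{r-1}S_k^{n_k}$ and the analogous $\bm{t}$-factor. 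By (\ref{transform:0}) with parameters $\bm{\mu}_{\bm{x}},\bm{u}$ the $\bm{s}$-factor equals $\bigl(\prod_{j=0}^{r-1}n_j!\bigr)C^*_{\bm{n}}(\bm{s};\bm{\mu}_{\bm{x}},\bm{u})$; taking $\bm{n}^*=(\bm{n},\bm{0}_{c-r})$, whose last $c-r$ entries vanish so $\prod_{j=1}^{c-1}T_j^{n^*_j}=\prod_{j=1}^{r-1}T_j^{n_j}$ and $\prod_{j=0}^{c-1}n^*_j!=\prod_{j=0}^{r-1}n_j!$, (\ref{transform:0}) with parameters $\bm{\mu}_{\bm{y}},\bm{v}$ makes the $\bm{t}$-factor equal to $\bigl(\prod_{j=0}^{r-1}n_j!\bigr)C^*_{\bm{n}^*}(\bm{t};\bm{\mu}_{\bm{y}},\bm{v})$. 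Collecting constants, the weight on $C^*_{\bm{n}}(\bm{s};\bm{\mu}_{\bm{x}},\bm{u})C^*_{\bm{n}^*}(\bm{t};\bm{\mu}_{\bm{y}},\bm{v})$ is $\frac{|\bm{\mu}|^{n_0}}{n_0!}\prod_{k=1}^{r-1}\frac{(|\bm{\mu}|\rho_k)^{n_k}}{n_k!}\bigl(\prod_{j=0}^{r-1}n_j!\bigr)^2=\rho_1^{n_1}\cdots\rho_{r-1}^{n_{r-1}}|\bm{\mu}|^{|\bm{n}|}\prod_{j=0}^{r-1}n_j!=\rho_1^{n_1}\cdots\rho_{r-1}^{n_{r-1}}h_{\bm{n}}(|\bm{\mu}|)$, the coefficient in (\ref{Table:1}); hence
\[
\phi(\bm{s},\bm{t})=\sum_{\bm{n}\in\mathbb{Z}_+^r}\rho_1^{n_1}\cdots\rho_{r-1}^{n_{r-1}}h_{\bm{n}}(|\bm{\mu}|)\,C^*_{\bm{n}}(\bm{s};\bm{\mu}_{\bm{x}},\bm{u})\,C^*_{\bm{n}^*}(\bm{t};\bm{\mu}_{\bm{y}},\bm{v}).
\]

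To finish I would equate the coefficients of $\prod_i s_i^{x_i}\prod_j t_j^{y_j}$ on the two sides. On the left this is $P(\bm{X}=\bm{x},\bm{Y}=\bm{y})$; on the right, by the defining property of the transform in (\ref{transform:0}), the coefficient of $\prod_i s_i^{x_i}$ in $C^*_{\bm{n}}(\bm{s};\bm{\mu}_{\bm{x}},\bm{u})$ is $P(\bm{x};\bm{\mu}_{\bm{x}})C_{\bm{n}}(\bm{x};\bm{\mu}_{\bm{x}},\bm{u})$, and similarly in the $\bm{t}$-variables, so we recover exactly the right-hand side of (\ref{Table:1}). The rearrangements and termwise extraction are legitimate: by (\ref{transform:0}), for $|s_i|\le1$, $|t_j|\le1$ one has $|S_k|,|T_k|\le1$ by Cauchy-Schwarz, $|1-S_0|,|1-T_0|\le2$ and $\mathrm{Re}(S_0-1),\mathrm{Re}(T_0-1)\le0$, whence $|C^*_{\bm{n}}(\bm{s};\bm{\mu}_{\bm{x}},\bm{u})|\le2^{n_0}/\prod_{j=0}^{r-1}n_j!$ and the $\bm{n}$-series is dominated by $\sum_{\bm{n}}\frac{(4|\bm{\mu}|)^{n_0}}{n_0!}\prod_{k=1}^{r-1}\frac{(|\rho_k||\bm{\mu}|)^{n_k}}{n_k!}<\infty$.

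The one delicate point is the constant bookkeeping: keeping the $h_{\bm{n}}(|\bm{\mu}|)$ factorials straight and checking that padding $\bm{n}$ with $c-r$ zeros to form $\bm{n}^*$ renders the extra $\bm{v}$-directions inert. A second, more conceptual route, matching the phrase ``natural extension from a contingency table'', is to condition on the Poisson$(|\bm{\mu}|)$ total $N=|\bm{X}|$, apply the contingency-table expansion (\ref{contingency:0}) to the conditional multinomial law of $(\bm{X},\bm{Y})$ given $N$, and average over $N$; identifying the result with (\ref{Table:1}) then uses (\ref{MVPC:0}) together with the one-dimensional Poisson-Charlier reproducing identity $\sum_{n\ge0}\frac{\lambda^n}{n!}C_n(m;\lambda)^2=e^{\lambda}m!/\lambda^m$ to account for the extra index $n_0$. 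I would present the generating-function argument as the main proof and record this alternative as a remark.
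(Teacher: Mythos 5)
Your argument is correct, and it takes a genuinely different route from the paper. The paper conditions on the grand total $N=|(X_{ij})|$, invokes the contingency-table expansion (\ref{contingency:0}) for the conditional law of the marginals, and then computes the biorthogonality relation $\mathbb{E}\big[C_{\bm{m}}(\bm{X};\bm{\mu}_{\bm{x}},\bm{u})C_{\bm{n}}(\bm{Y};\bm{\mu}_{\bm{y}},\bm{v})\big]=\delta_{\bm{m}\bm{n}^*}h_{\bm{n}}(|\bm{\mu}|)^{-1}\rho_1^{n_1}\cdots\rho_{r-1}^{n_{r-1}}$ by the tower property together with the orthogonality of the $1$-dimensional Poisson--Charlier polynomials; the expansion (\ref{Table:1}) is then read off from these mixed moments. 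Your route instead factorizes the joint \emph{pgf} $\exp\{\sum_{ij}\mu_{ij}(s_it_j-1)\}$ via the Lancaster expansion (\ref{Lancaster:0}) and matches each factor against the transform (\ref{transform:0}); your bookkeeping of the constants ($h_{\bm{n}}$, the padding of $\bm{n}$ to $\bm{n}^*$, the identification $\mu_{x,i}=|\bm{\mu}|p^r_{i\cdot}$) checks out, and the domination bound you give legitimizes the termwise coefficient extraction. What your approach buys is that it establishes the identity (\ref{Table:1}) directly, without implicitly relying on completeness of the polynomial system to pass from the mixed-moment computation to the pointwise expansion (a step the paper leaves tacit), and without presupposing (\ref{contingency:0}); what the paper's approach buys is brevity and the structural insight that the Poisson array is literally a Poissonized contingency table, which is the theme the surrounding text emphasizes. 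Your closing sketch of the conditioning route is essentially the paper's proof, though the ``reproducing identity'' you quote is not quite the ingredient needed there --- what is actually used is the orthogonality $\mathbb{E}[C_{m_0}C_{n_0}]=\delta_{m_0n_0}n_0!/|\bm{\mu}|^{n_0}$ applied after averaging the Krawtchouk normalization ${|\bm{X}|\choose\bm{n}_1}$ over the Poisson total --- but since that is offered only as a remark it does not affect the validity of your main argument.
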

 \begin{proof}
The conditional distribution of $(X_{ij})$ given $|(X_{ij})|$ is that of an $r\times c$ contingency table with $|(X_{ij})|$ observations. The conditional distribution of the marginals $(\bm{X},\bm{Y})$ therefore has a Lancaster expansion (\ref{contingency:0}), where $N=|(X_{ij})|=|\bm{X}|=|\bm{Y}|$.
Then 
\begin{eqnarray}
&&\mathbb{E}\Big [C_{\bm{m}}(\bm{X};\bm{\mu}_{\bm{x}},\bm{u})C_{\bm{n}}(\bm{Y};\bm{\mu}_{\bm{y}},\bm{v})\Big ]
\nonumber \\
&&= \mathbb{E}\Bigg [{(m_0)!}^{-1}C_{m_0}(|\bm{X}|- |\bm{m_1}|;|\bm{\mu}|)
{(n_0)!}^{-1}C_{n_0}(|\bm{Y}|- |\bm{n_1}|;|\bm{\mu}|)
\nonumber \\
&&~~\times\mathbb{E}\Big [
Q_{\bm{m}_1}(\bm{X};|\bm{X}|,\bm{p}^r,\bm{u})
Q_{\bm{n}_1}(\bm{Y};|\bm{Y}|,\bm{p}^c,\bm{v}) \>\Big |\> |(X_{ij})| \Big ] \Bigg ]
\nonumber \\
&&=\mathbb{E}\Bigg [C_{m_0}(|\bm{X}|- |\bm{n}_1|;|\bm{\mu}|)
C_{n_0}(|\bm{X}|- |\bm{n}_1|;|\bm{\mu}|){|\bm{X}|\choose \bm{n}_1}\Bigg ]
\nonumber \\
&&~~~~~\times\delta_{\bm{m}_1\bm{n}_1^*}
{(m_0)!}^{-1} {(n_0)!}^{-1}|\bm{\mu}|^{-(|\bm{m}_1|+|\bm{n}_1|)}\rho_1^{n_1}\cdots \rho_{r-1}^{n_{r-1}}
\nonumber \\
&&=
{(n_0)!}^{-1}\mathbb{E}\Bigg [C_{m_0}(|\bm{X}|;|\bm{\mu}|)
C_{n_0}(|\bm{X}|;|\bm{\mu}|)\Bigg ]
\nonumber \\
&&~~~~~\times\delta_{\bm{m}_1\bm{n}_1^*}(n_0!n_1!\cdots n_{r-1}!)^{-1}|\bm{\mu}|^{-|\bm{m}_1|}\rho_1^{n_1}\cdots \rho_{r-1}^{n_{r-1}}
\nonumber \\
&&=\delta_{\bm{m}\bm{n}^*}(n_0!n_1!\cdots n_{r-1}!)^{-1}|\bm{\mu}|^{-|\bm{n}|}\rho_1^{n_1}\cdots \rho_{r-1}^{n_{r-1}}
\nonumber \\
&&=\delta_{\bm{m}\bm{n}^*}h_{\bm{n}}(|\bm{\mu}|)^{-1}\rho_1^{n_1}\cdots \rho_{r-1}^{n_{r-1}}
\label{Table:0}
\end{eqnarray}
(\ref{Table:1}) now follows from (\ref{Table:0}).
\end{proof}
\subsection{Multivariate Poisson Lancaster expansions}
The next theorem is the most important in this paper, together with the identification, in Remark \ref{PoissonMC}, of a Markov chain which has transition functions of $P(\bm{Y}=\bm{y}\mid \bm{X}=\bm{x})$ when $(\bm{X},\bm{Y})$ has a probability distribution (\ref{H:0}).
\begin{thm}\label{HG:2}Let $\bm{u}$ be an orthogonal basis on $\bm{p}=\bm{\mu}/|\bm{\mu}|$ with (\ref{basic:0}) holding. Suppose the basis satisfies the hypergroup property (\ref{dhyper:1}).
Then
\begin{equation}
P(\bm{x};\bm{\mu}) P(\bm{y};\bm{\mu})\Big \{
1 + \sum_{|\bm{n}|\geq 1}\rho_{\bm{n}}h_{\bm{n}}(|\bm{\mu}|,\bm{u})
C_{\bm{n}}(\bm{x};\bm{\mu},\bm{u})
C_{\bm{n}}(\bm{y};\bm{\mu},\bm{u}) \Big \}
\label{H:0}
\end{equation}
is non-negative and thus a proper bivariate Poisson distribution if and only if
\begin{eqnarray}
\rho_{\bm{n}} = 
\mathbb{E}_{\bm{\xi}}\Big [
\prod_{j=0}^{d-1}\rho_j(\bm{\xi})^{n_j}\Big ],
\label{H:1}
\end{eqnarray}
where
$\rho_j(\bm{\xi})=\sum_{i=1}^{d-1}u_i^{(j)}\xi_i$
and $\bm{\xi}$ is a random vector in the unit simplex.
\end{thm}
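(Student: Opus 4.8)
The plan is to prove the two implications separately: for the ``if'' direction I would realize the extreme correlation arrays by explicit bivariate Poisson laws and then take mixtures, and for the ``only if'' direction I would combine the finite hypergroup positivity result around (\ref{dLanc:0}) with a one-dimensional moment (Hausdorff) characterization forced by the infinite support of the Poisson marginals. Since (\ref{H:0}) depends linearly on $\rho_{\bm n}$ and a mixture of bivariate probability distributions is again one, sufficiency reduces to exhibiting, for each fixed $\bm\xi$ in the simplex, a proper bivariate distribution of the form (\ref{H:0}) whose eigenvalue array is the extreme product $\rho_{\bm n}=\prod_{j=0}^{d-1}\rho_j(\bm\xi)^{n_j}$; mixing an arbitrary law of $\bm\xi$ against these then yields exactly (\ref{H:1}). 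For fixed $\bm\xi$ I would use a coupling of two Poisson$(\bm\mu)$ vectors built from the random-element representation $\bm X=\sum_{k=1}^{|\bm X|}\bm e_{Z_k}$ of the Remark after Theorem 1: a thinned ``common'' count of the two vectors of mean $|\bm\mu|\rho_0(\bm\xi)$, whose shared elements are type-linked through the finite bivariate distribution (\ref{dLanc:0}) attached to the hypergroup basis $\bm u$ (this is where the kernel $b_{jk}(\ell)$ and assumption (\ref{dhyper:1}) enter), together with independent ``private'' counts of mean $|\bm\mu|(1-\rho_0(\bm\xi))$ per vector carrying independent $\bm p$-distributed types; a product-of-PGFs check shows both margins are Poisson$(\bm\mu)$. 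The cross-moments then come from the bivariate generating function $\mathbb E[\prod_i s_i^{X_i}\prod_j t_j^{Y_j}\,G_{\text{PC}}(\bm X,\bm z,\bm u)G_{\text{PC}}(\bm Y,\bm w,\bm u)]$: collapsing the common-element contribution with $u_j^{(r)}u_k^{(r)}=\sum_\ell b_{jk}(\ell)u_\ell^{(r)}$, the exponent should split into the product of the two single-variable transforms from (\ref{transform:1a}) times a cross factor $\exp\{\sum_{j=0}^{d-1}\rho_j(\bm\xi)\,\zeta_j(\bm z)\zeta_j(\bm w)\}$ in the linear forms of (\ref{Orthogcalc:0}); reading off coefficients (equivalently, combining with the Corollary) gives $\mathbb E[C_{\bm m}(\bm X;\bm\mu,\bm u)C_{\bm n}(\bm Y;\bm\mu,\bm u)]=\delta_{\bm m\bm n}h_{\bm n}(|\bm\mu|,\bm u)^{-1}\prod_j\rho_j(\bm\xi)^{n_j}$, i.e.\ (\ref{H:0}) with the extreme eigenvalues. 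One may alternatively bypass the coupling and simply write down the candidate bivariate Poisson transform, verify non-negativity of its lattice coefficients via the hypergroup property, and read off the expansion.

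For necessity, assume (\ref{H:0}) is a genuine probability distribution and extract two families of constraints. Restricting to indices $\bm n=(n_0,\bm 0)$ and summing the joint law over level sets of $|\bm x|$ and $|\bm y|$, the Krawtchouk factors integrate to zero, so the induced joint law of $(|\bm X|,|\bm Y|)$ is a one-dimensional Poisson Lancaster expansion with eigenvalues $\rho_{(n_0,\bm 0)}$; the classical Poisson characterization (\ref{sequences}) forces $\rho_{(n_0,\bm 0)}=\int_0^1 z^{n_0}\varphi(dz)$ for a probability measure on $[0,1]$. Conditioning instead on $|\bm X|=|\bm Y|=N$ (an event of positive probability), the conditional law is a bivariate multinomial whose expansion in multivariate Krawtchouk polynomials has, after renormalization, eigenvalues governed by the $\bm n_1$-part; since $\bm u$ is a hypergroup, the multivariate-Krawtchouk hypergroup characterization of \citet{DG2014} (the analog of (\ref{dLanc:0})) forces these to be of the form $\mathbb E[\prod_{j\ge1}\rho_j(\bm\eta)^{n_j}]$ for a random $\bm\eta$ supported on the simplex. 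It then remains to synchronize the two: using the transform of the Corollary evaluated at arguments that make $S_0,\dots,S_{d-1}$ free, non-negativity of all the mixed lattice probabilities should yield joint complete monotonicity of $\rho_{\bm n}$ in the $n_0$-direction together with membership of the $\bm u$-part in the hypergroup dual, pinning $\rho_{\bm n}$ to a single moment array $\mathbb E[z^{n_0}\prod_{j\ge1}\rho_j(\bm\eta)^{n_j}]$; one finally identifies, through the hypergroup convolution, that the admissible joint laws of $(z,\bm\eta)$ are exactly the images of one random $\bm\xi$ in the simplex under $\bm\xi\mapsto(\rho_0(\bm\xi),\rho_1(\bm\xi),\dots,\rho_{d-1}(\bm\xi))$, which is (\ref{H:1}).

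I expect the synchronization step in the necessity proof to be the crux: the one-dimensional moment constraint and the finite hypergroup constraint are each routine in isolation, but showing they must arise from a \emph{single} common random $\bm\xi$ in the simplex --- rather than from an arbitrary joint law of a $[0,1]$-valued and a hypergroup-valued variable --- requires genuinely using both the infinite support (to upgrade the $n_0$-behaviour to a full Hausdorff representation) and the hypergroup convolution structure (to confine $(\rho_0,\dots,\rho_{d-1})$ to the simplex image). On the sufficiency side the only delicacy is the bivariate PGF bookkeeping that turns the coupling into the clean cross factor $\exp\{\sum_{j=0}^{d-1}\rho_j(\bm\xi)\zeta_j(\bm z)\zeta_j(\bm w)\}$.
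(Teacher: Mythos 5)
Your sufficiency argument is essentially the paper's: the ``alternative'' you mention in passing --- write down the candidate bivariate transform $\sum_{\bm n}\rho_{\bm n}h_{\bm n}C^*_{\bm n}(\bm s)C^*_{\bm n}(\bm t)$, use $u_j^{(r)}u_k^{(r)}=\sum_l b_{jk}(l)u_l^{(r)}$ (equivalently $\mathfrak{s}(j,k,l)\ge 0$ and $\sum_{j,k}p_jp_k\mathfrak{s}(j,k,l)=1$) to factor it as $\mathbb{E}_{\bm\xi}\bigl[\exp\{|\bm\mu|(1-|\bm\xi|)(S_0-1+T_0-1)\}\exp\{|\bm\mu|\sum_{j,k,l}p_jp_k\xi_l\mathfrak{s}(j,k,l)(s_jt_k-1)\}\bigr]$, and observe this is a \emph{pgf} --- is exactly the proof in the text, and your common/private Poisson coupling is the probabilistic reading of that factorization which the paper records separately as the structure (\ref{REIC:0}). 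That half is fine.

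The necessity direction has a genuine gap, and you have correctly located it yourself: the ``synchronization'' step is not an expected technicality but the entire difficulty, and your proposal contains no argument for it. Marginalizing to $(|\bm X|,|\bm Y|)$ does give a Hausdorff representation for $\rho_{(n_0,\bm 0)}$, but conditioning on $|\bm X|=|\bm Y|=N$ does \emph{not} cleanly isolate the $\bm n_1$-part: for fixed $\bm n_1$ the conditional Krawtchouk eigenvalue is $\sum_{n_0}\rho_{(n_0,\bm n_1)}\,c_{n_0}C_{n_0}(N-|\bm n_1|)^2$, already entangling $n_0$ with $\bm n_1$ and depending on $N$, so the two families of constraints you extract do not live on separate coordinates to begin with; and even if they did, nothing in the Hausdorff and hypergroup characterizations separately forces the $[0,1]$-valued variable and the ``hypergroup-dual'' variable to be the images $\rho_0(\bm\xi),\dots,\rho_{d-1}(\bm\xi)$ of one random point of the simplex. (You also invoke a Lancaster characterization for the bivariate multinomial with multivariate Krawtchouk eigenfunctions that is not established in this paper.) The paper avoids all of this with one move that produces a single $\bm\xi$ for all $\bm n$ simultaneously: from $\rho_{\bm n}C_{\bm n}(\bm x)=\mathbb{E}[C_{\bm n}(\bm Y)\mid\bm X=\bm x]$, isolate the unique leading monomial $m(\bm n,\bm x,\bm u)=\prod_j(\sum_iu_i^{(j)}x_i)^{n_j}$, divide by $|\bm x|^{|\bm n|}$, and let $|\bm x|\to\infty$ with $x_d/|\bm x|\to 1$ (so that $m(\bm n,\bm x,\bm u)/|\bm x|^{|\bm n|}\to 1$ since $u_d^{(j)}=1$); along a subsequence $\bm Y/|\bm x|\mid\bm X=\bm x$ converges to a random vector $\bm\xi\ge 0$, the lower-order remainder vanishes, and $\rho_{\bm n}=\mathbb{E}[\prod_j\rho_j(\bm\xi)^{n_j}]$ drops out, with $|\bm\xi|\le 1$ forced by $|\rho_{(n,0,\dots,0)}|=|\mathbb{E}[|\bm\xi|^n]|\le 1$ for all $n$. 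Note this limit argument does not even use the hypergroup property, which is needed only for sufficiency; your necessity route leans on it in a way the theorem does not require.
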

\begin{proof}
\emph{Necessity.}  Suppose (\ref{H:0}) holds. Let $(\bm{X},\bm{Y})$ be a pair of Poisson random vectors with distribution (\ref{H:0}).  Then
\begin{equation}
\rho_{\bm{n}}C_{\bm{n}}(\bm{x};\bm{\mu},\bm{u})
= \mathbb{E}\Big [C_{\bm{n}}(\bm{Y};\bm{\mu},\bm{u}) \mid \bm{X}=\bm{x}\Big ].
\label{cc:0}
\end{equation}
$C_{\bm{n}}(\bm{x};\bm{\mu},\bm{u})$ has only one leading term of degree $|\bm{n}|$, proportional to 
\[
m(\bm{n},\bm{x},\bm{u})
= \prod_{j=0}^{d-1}\Bigg (\sum_{i=1}^{d-1}u_i^{(j)}x_i\Bigg )^{n_j}.
\]
Rearranging (\ref{cc:0})
\begin{equation}
\mathbb{E}\Big [m(\bm{n},\bm{Y},\bm{u})\mid \bm{X}=x\Big ]
=\rho_{\bm{n}}m(\bm{n},\bm{x},\bm{u})
+ R_{|\bm{n}|-1}(\bm{x})
\label{H:3}
\end{equation}
where $R_{|\bm{n}|-1}(\bm{x})$ is a polynomial of degree $|\bm{n}|-1$ in $\bm{x}$. Divide (\ref{H:3}) by $|\bm{x}|^{|\bm{n}|}$ and let $|\bm{x}|\to \infty$ such that $x_d/|\bm{x}| \to 1$ and 
$x_j/|\bm{x}| \to 0$, $1 \leq j <d$.
Let $\bm{\xi}$ be a random variable with the limit distribution of $\bm{Y}/|\bm{x}|$ given 
$\bm{X} = \bm{x}$.
The limit is taken through a sub-sequence such that the limit distribution is proper.
Recalling that $u_d^{(j)}=1$ for $j=0,\ldots ,d$, (\ref{H:1}) holds from the limit. $\rho_0(\bm{\xi}) = |\bm{\xi}|$ and with a particular $\bm{n}^\prime = (n, 0,\ldots,0)$, 
$
\rho_{\bm{n}^\prime} 
= \mathbb{E}_{\bm{\xi}}\big [|\bm{\xi}|^n\big ]
$.
Since $-1 \leq \rho_{\bm{n}^\prime} \leq 1$ for all such $\bm{n}^\prime$,  $|\bm{\xi}|$ must belong to the unit simplex.
\medskip

\noindent
\emph{Sufficiency.} Suppose that (\ref{H:1}) holds. 
Let  $S_j=\sum_{i=1}^dp_is_iu_i^{(j)}$, $j=0,\ldots,d-1$
and similarly for $T_j$. Note that $S_0 = \sum_{i=1}^dp_is_i$, $T_0 = \sum_{i=1}^dp_it_i$.
The potential probability generating function (\emph{pgf}) of a bivariate distribution $(\bm{X},\bm{Y})$ formed from (\ref{H:0}) is
\begin{eqnarray}
&&\sum_{\bm{n}:|\bm{n}| \geq 0}
\rho_{\bm{n}}h_{\bm{n}}(|\bm{\mu}|,\bm{u})
C^*_{\bm{n}}(\bm{s};\bm{\mu},\bm{u})
C^*_{\bm{n}}(\bm{t};\bm{\mu},\bm{u})
~~~~
\nonumber \\
&&=\exp \big \{|\bm{\mu}|(S_0-1 +T_0-1)  \big \}
\nonumber \\
&&\times
\mathbb{E}_{\bm{\xi}}\Big [\exp \Big \{
|\bm{\mu}||\bm{\xi}|(1-S_0)(1-T_0)
+|\bm{\mu}|\sum_{k=1}^{d-1}a_k^{-1}\rho_k(\bm{\xi})S_kT_k
\Big \}\Big ]
\nonumber \\
&&=\exp \big \{|\bm{\mu}|(S_0-1 +T_0 - 1)  \big \}
\nonumber \\
&&\times
\mathbb{E}\Big [\exp \Big \{
|\bm{\mu}||\bm{\xi}|\big [(1-S_0)(1-T_0)-S_0T_0\big ]
+|\bm{\mu}|\sum_{k=0}^{d-1}a_k^{-1}\rho_k(\bm{\xi})S_kT_k
\Big \}\Big ]
\nonumber \\
&&=\mathbb{E}_{\bm{\xi}}\Big [\exp \big \{|\bm{\mu}|(1-|\bm{\xi}|)
(S_0-1+T_0-1)
\big \}
\nonumber \\
&&~~~~\times
\exp \Big \{
|\bm{\mu}|\sum_{j,k,l=1}^dp_jp_k\xi_l\mathfrak{s}(j,k,l)(s_jt_k-1)
\Big \}\Big ]
\label{H:4}
\end{eqnarray}
The identity $\sum_{j,k=1}^dp_jp_k\mathfrak{s}(j,k,l) = 1$, for $l=1,\ldots,d$ is used in the last line.
(\ref{H:4}) is a \emph{pgf} because
$\mathfrak{s}(j,k,l) \geq 0$, completing the sufficiency proof.
\end{proof}
\begin{rem} $\bm{X}$ and $\bm{Y}$ are independent if
$\rho_{\bm{n}} = 0$ for all $|\bm{n}| \geq 1$. 
$\bm{X}=\bm{Y}$ if $\rho_{\bm{n}} = 1$ for $|\bm{n}| \geq 1$,
achieved when $\xi_1=\cdots = \xi_{d-1}=0$ and $\xi_d=1$.
\end{rem}
\begin{rem}
The structure of $(\bm{X},\bm{Y})$ as marginals of a Poisson array with random means is clear from (\ref{H:4}).
\begin{equation}
X_j =^{\cal D} Z_j + \sum_{k=1}^dZ_{jk},\>
Y_k =^{\cal D} Z_k^\prime + \sum_{j=1}^dZ_{jk}.
\label{REIC:0}
\end{equation}
where $\bm{Z}$, $\bm{Z}^\prime$ and $(Z_{jk})$ are Poisson random variables, conditionally independent given $\bm{\xi}$, with conditional means 
\[\mathbb{E}\big [\bm{Z}\mid \bm{\xi}\big ]
= \mathbb{E}\big [\bm{Z}^\prime\mid \bm{\xi}\big ]
= (1-|\bm{\xi}|)\bm{\mu}
\]
 and
\begin{eqnarray}
\mathbb{E}\big [Z_{jk}\mid \bm{\xi}\big ]
&=& 
|\bm{\mu}|p_jp_k\sum_{l=1}^d\xi_l\mathfrak{s}(j,k,l)
\nonumber \\
&=& 
|\bm{\mu}||\bm{\xi}|
p_jp_k\Big \{1 + \sum_{r=1}^{d-1}\theta_ra_i^{-1}u_j^{(r)}u_k^{(r)}\Big \}
\nonumber \\
&=& |\bm{\mu}||\bm{\xi}|p_{jk}(\bm{\xi}),
\label{Lrandom}
\end{eqnarray}
where $\theta_r = \sum_{l=1}^d\omega_lu_l^{(r)}$, with $\omega_l = \xi_l/|\bm{\xi}|$ and 
\begin{equation}
p_{jk}(\bm{\xi})=p_jp_k\Big \{1 + \sum_{r=1}^{d-1}\theta_ra_r^{-1}u_j^{(r)}u_k^{(r)}\Big \}
\label{Lrandom:0}
\end{equation}
is a bivariate probability distribution.
(\ref{Lrandom}) and (\ref{Lrandom:0}) are non-negative because of the hypergroup property satisfied by the  orthogonal basis.

If $|\bm{\xi}|\equiv 1$ then the structure is that of marginal distributions in a Poisson table as in Theorem \ref{Thm 2}.

The structure (\ref{REIC:0}) is an extension of the bivariate 1-dimensional 
Poisson characterization (\ref{PREIC}) mentioned in the Introduction.

\end{rem}
\begin{rem}
The conditional \emph{pgf} of the extreme point distribution $\bm{Y}\mid \bm{X} = \bm{x}$ is
\begin{eqnarray}
&&\exp \Big \{|\bm{\mu}|(1-|\bm{\xi}|)(T_0-1)\Big \}
\prod_{i=1}^d
\Big (1 - |\bm{\xi}| + \sum_{j,l=1}^d\xi_lp_j\mathfrak{s}(i,j,l)t_j
\Big )^{x_i}
\nonumber \\
&&=
\exp \Big \{|\bm{\mu}|(1-|\bm{\xi}|)(T_0-1)\Big \}
\prod_{i=1}^d
\Big (1 - |\bm{\xi}| + |\bm{\xi}|\sum_{j=1}^dp_{j\mid i}(\bm{\omega})t_j
\Big )^{x_i},
\label{CondPoisson:0}
\end{eqnarray}
where
$
p_{j\mid i}(\bm{\omega})= p_{ij}(\bm{\omega})/p_i
$
is a conditional probability distribution.
Both factors in (\ref{CondPoisson:0}) are \emph{pgfs}.
\end{rem}
\begin{rem}\label{PoissonMC} An interpretation of the extreme points in the conditional distribution of $\bm{Y}\mid \bm{X}$ with a \emph{pgf} 
(\ref{CondPoisson:0}) is as the transition functions in a discrete time Markov chain. Consider a queueing system with an infinite number of servers and Poisson arrivals. The servers are of $d$ possible types. Customers arrive at rate $|\bm{\mu}|(1-|\bm{\xi}|)$ and choose an available server of type $i$ with probability $p_i$. Each queue has length zero or one. At a time epoch each of the $X_i$ customers in type $i$ queues is either served with probability $1-|\bm{\xi}|$ or changes to an available server of type $j$ with probability $|\bm{\xi}|p_{j\mid i}(\bm{\omega})$ and is not served in the current epoch.
This describes the transition from the numbers $\bm{X}$ in the queues of $d$ types to $\bm{Y}$. The stationary distribution of the Markov chain is Poisson with mean $\bm{\mu}$. The general transition functions are a mixture of extreme point transition functions over a distribution for $\bm{\xi}$. The importance of this representation is that it characterizes \emph{every} discrete time Markov chain with stationary Poisson $(\bm{\mu})$ distribution and eigenfunctions $\{C_{\bm{n}}(\cdot;\bm{\mu},\bm{u})\}$ as being a mixture of these extreme point processes. 

The extreme points of the 1-dimensional Markov chain of the total number in the queue $|\bm{X}|$, without regard to type, have a structure that is a classical $M/M/\infty$ queue with birth rates $|\bm{\mu}|(1-|\bm{\xi}|)$, death rates $1-|\bm{\xi}|$ and a stationary Poisson distribution with mean $|\bm{\mu}|$.
\end{rem}
\begin{rem}The distribution of $(\bm{X},\bm{Y})$ is asymptotically normal as $\bm{\mu} \to \bm{\infty}$, independent within vectors, with means and variances $\bm{\mu}$, and random cross covariance matrix
\[
|\bm{\mu}||\bm{\xi}|p_{jk}(\omega).
\]
\end{rem}
\begin{rem}\label{ContinuousPoisson}
The general form of transition functions from $\bm{x}$ to $\bm{y}$ in time $t$ in a continuous time reversible Markov process $\{\bm{X}(t)\}_{t \geq 0}$ with a stationary multivariate Poisson distribution and multivariate Poisson-Charlier polynomial eigenfunctions has an expansion
\begin{equation}
P(\bm{y};\bm{\mu})\Big \{
1 + \sum_{|\bm{n}|\geq 1}\rho_{\bm{n}}(t)h_{\bm{n}}(|\bm{\mu}|,\bm{u})
C_{\bm{n}}(\bm{x};\bm{\mu},\bm{u})
C_{\bm{n}}(\bm{y};\bm{\mu},\bm{u}) \Big \}
\label{Hcts:0}
\end{equation}
where 
\begin{eqnarray}
\rho_{\bm{n}}(t) &=& \exp \Big \{-\lambda t \Big [1-\rho_{\bm{n}}\Big ]
\Big \}
\nonumber \\
&=& 
\exp \Big \{-\lambda t \mathbb{E}_{\bm{\xi}}\Big [1-\prod_{j=0}^{d-1}
\rho_j(\bm{\xi})^{n_j}\Big ]
\Big \},
\label{Hcts:1}
\end{eqnarray}
or a limit form, from \cite{B1954}.

We consider a simpler particular case which leads to a multi-type birth and death population process with immigration, deaths, and changes of type. Recall the notation $\rho_k(\bm{\xi}) = \sum_{i=1}^du_i^{(k)}\xi_i$.
The idea is to choose $\lambda,\bm{\xi}$ so that $\rho_j(\bm{\xi}) \to 1$ and
\begin{equation}
\lambda \mathbb{E}_{\bm{\xi}}\Big [1-\prod_{j=0}^{d-1}
\rho_j(\bm{\xi})^{n_j}\Big ] \approx \lambda \sum_{j=0}^{d-1}n_j\big (1-\rho_j(\mathbb{E}[{\bm{\xi}}])\big ).
\label{small:0}
\end{equation}
To achieve this set $\mathbb{E}[{\xi}_i] = \lambda^{-1}\gamma_i$, $i=1,\ldots,d-1$ and
$1-\mathbb{E}[\xi_d] = \sum_{i=1}^{d-1}\lambda^{-1}\gamma_i + \lambda^{-1}\nu$,
where $\bm{\gamma},\nu \geq 0$.
Then $1-\rho_0(\mathbb{E}[\bm{\xi}]) = 1 - \mathbb{E}[|\bm{\xi}|] = \lambda^{-1}\nu$ and recalling that
$u_d^{(j)}=1$,
\begin{eqnarray*}
1-\rho_j(\mathbb{E}[\bm{\xi}]) &= & 1 - \sum_{i=1}^{d-1}u^{(j)}_i\mathbb{E}[\xi_i] -\mathbb{E}[\xi_d]
\nonumber \\
&=& \lambda^{-1}\big (\nu + \sum_{i=1}^{d-1}\gamma_i(1-u_i^{(j)})\big ).
\end{eqnarray*}
Assume also that $\lambda \text{Var}\big (\rho_k(\bm{\xi})\big ) \to 0$.
Because of the hypergroup property of $\bm{u}$, $|u_i^{(j)}| \leq 1$, so terms in the sum are non-negative. Let $\lambda \to \infty$. The eigenvalues have the limit form
\begin{equation}
\rho_{\bm{n}}(t) = \exp \Big \{-t 
|\bm{n}|\nu + \sum_{j=1}^{d-1}n_j\theta_j(\bm{\gamma})
\Big \},
\label{Hcts:2}
\end{equation}
where $\theta_j(\bm{\gamma})=\sum_{i=1}^{d-1}\gamma_i(1-u_i^{(j)})$.
Now consider the \textcolor{black}{infinitesimal} \emph{pgf} for transitions in $(t,t+\Delta t)$ as $\Delta t \to 0$, that is,
$\mathbb{E}\Big [\prod_{k=1}^dt_k^{X_k(t+\Delta t)}\mid \bm{X}(t) = \bm{x}\Big ]$.
Only the linear terms in $\rho_{\bm{n}}(\Delta t)$ need to be considered.
\begin{eqnarray*}
\rho_{\bm{n}}(\Delta t) &=& 
\exp \Big \{-\Delta t\Big (
|\bm{n}|{\nu} + \sum_{j=1}^{d-1}n_j\theta_j({\bm{\gamma}})\Big )
\Big \}
\nonumber \\
\end{eqnarray*}
where ${\nu} = \mathbb{E}[{\nu}]$, ${\bm{\gamma}}= \mathbb{E}[\bm{\gamma}]$ and
\begin{eqnarray}
\widetilde{\rho}_k(\mathbb{E}[\bm{\xi}]) &\coloneqq & \exp \Big \{-\Delta t\big ({\nu} + \theta_k({\bm{\gamma}})\big ) \Big \}
\nonumber \\
&\approx &  1 - \Delta t \big ({\nu} + \theta_k({\bm{\gamma}})\big ).
\label{evcts}
\end{eqnarray}
The infintesimable \emph{pgf} is now obtained by substituting the eigenvalues (\ref{evcts}) in (\ref{CondPoisson:0}). Calculating the quantities appearing:
\[
\mathbb{E}[1-|\bm{\xi}|]= 1 - \widetilde{\rho}_0(\mathbb{E}[\bm{\xi}]) \approx 1 - e^{-{\nu} \Delta t} \approx {\nu} \Delta t,
\]
and
\begin{eqnarray}
\mathbb{E}[|\bm{\xi}|p_{j\mid i}(\bm{\omega})] 
&\approx & p_j\sum_{k=0}^{d-1}\Big (1 - \Delta t\big ({\nu} + \theta_k({\bm{\gamma}})\big )\Big )a_k^{-1}u_i^{(k)}u_j^{(k)}
\nonumber \\
&=& \delta_{ij}\big (1 - \Delta t({\nu} + |{\bm{\gamma}}|)\big )
 + \Delta tp_j\sum_{l=1}^{d-1}\gamma_l\mathfrak{s}(i,j,l)
\nonumber \\
&=&\delta_{ij}\big (1 - \Delta t({\nu} + |{\bm{\gamma}}|)\big )
 + \Delta t |{\bm{\gamma}}| \widetilde{p}_{j\mid i},
\label{calculation:0}
\end{eqnarray}
where
\[
\widetilde{p}_{j\mid i}\coloneqq 
p_j\sum_{l=1}^{d-1}\big ({\gamma}_l/|{\bm{\gamma}}|\big )\mathfrak{s}(i,j,l)
\]
is a transition matrix with stationary distribution $\bm{p}$.
Then (\ref{CondPoisson:0}) is, to order $\Delta t$,
\begin{eqnarray}
&&\Big ( 1 - \Delta t|\bm{\mu}|{\nu} \sum_{i=1}^dp_it_i\Big )
\nonumber \\
&&~\times 
\prod_{i=1}^d\Big (
\big ( 1 - \Delta t({\nu} + |{\bm{\gamma}}|\big )t_i + \Delta t{\nu}
 + \Delta t \sum_{j=1}^d|{\bm{\gamma}}|\widetilde{p}_{j\mid i}t_j\Big )^{x_i}.
 \label{ctspgf:0}
\end{eqnarray}
$\{\bm{X}(t)\}_{t\geq 0}$ is now identified as a multi-type birth and death process from (\ref{ctspgf:0}). Of course only the terms to order $\Delta t$ matter, but the \emph{pgf} is clearer in the current form. Thinking of $\bm{X}(t)$ as counts of individuals in a population at time $t$; immigration occurs at rates ${\nu}|\bm{\mu}|p_i$; deaths occur to individuals at rate ${\nu}$; and changes of type from $i$ to $j$ occur at rates ${\bm{\gamma}}\widetilde{p}_{j\mid i}$. The stationary distribution is Poisson $(\bm{\mu})$. $|\bm{X(t)}|$ is a conventional birth and death process with birth rates when $|\bm{X}(t)|=|\bm{x}|$ of $|\bm{\mu}|{\nu}$ and death rates $|\bm{x}|{\nu}$. The spectral expansion for the transition functions of $|\bm{X}(t)|$ in terms of the 1-dimensional Poisson-Charlier polynomials is well known, see for example \citet{S2000} p33.
\end{rem}

\section{Multivariate negative binomial}
\subsection{Multivariate Meixner polynomials}
Let $X$ be a negative binomial random variable with probability distribution
\begin{equation}
\frac{\Gamma (\alpha + x)}{\Gamma (\alpha)x!}\frac{\theta^x}{(1+\theta)^{x+\alpha}},\>x = 0,1,\ldots,\>\alpha,\theta > 0.
\label{Meixner:0}
\end{equation}
The Meixner orthogonal polynomials on $X$,
$\big \{M_n(X;\alpha,\gamma)\big \}_{n=0}^\infty$ have a generating function
\begin{equation}
\sum_{n=0}^\infty \frac{\Gamma (\alpha + n)}{\Gamma (\alpha)n!}M_n(x;\alpha,\kappa)z^n = (1-z/\kappa)^x(1-z)^{-(x+\alpha)},
\label{Meixner:1}
\end{equation}
with $\kappa = \theta/(1+\theta)$.
The orthogonality relationship for the 1-dimensional Meixner polynomials is
\begin{equation}
\mathbb{E}\big [M_m(X;\alpha,\kappa)M_n(X;\alpha,\kappa)\big ]
= \delta_{mn}\frac{\Gamma (\alpha)n!}{\Gamma (\alpha + n)\kappa^n}.
\label{onedorthog}
\end{equation}
The multivariate Meixner distribution is
\begin{equation}
P(\bm{x};\alpha,\theta,\bm{p}) = \frac{\Gamma (\alpha+|\bm{x}|)}{\Gamma (\alpha)|\bm{x}|!}
\frac{\theta^{|\bm{x}|}}{(1+\theta)^{\alpha+|\bm{x}|}}
{|\bm{x}|\choose \bm{x}}\prod_{i=1}^dp_i^{x_i},
\label{Meixner:4}
\end{equation}
for $\bm{x} \in \mathbb{Z}_+^d$.
The conditional distribution of $\bm{X}$ given $|\bm{X}|$ is 
$m(\bm{x};|\bm{x}|,\bm{p})$.
The \emph{pgf} of (\ref{Meixner:4}) is
\begin{equation}
\Big [ 1 - \theta \sum_{i=1}^dp_i(s_i-1)\Big ]^{-\alpha}.
\end{equation}
Let
\[
P(\bm{x};\mu,\bm{p}) = 
\prod_{i=1}^de^{-\mu p_i}
\frac{(\mu p_i)^{x_i}}
{x_i!}.
\]
The multivariate Meixner distribution is a Poisson-Gamma mixture
\begin{equation}
P(\bm{x};\alpha,\theta,\bm{p})
= \mathbb{E}_\mu\big [P(\bm{x};\mu,\bm{p})\big ],
\label{Meixner:5}
\end{equation}
where $\mu \sim$ Gamma ($\alpha,\theta$) with density
\[
\frac{\mu^{\alpha - 1}}{\theta^\alpha
\Gamma (\alpha)}e^{-\mu/\theta},\>\mu > 0.
\]
\textcolor{black}{
\citet{G1975}, \citet{I2012a} and \citet{GMVZ2014} construct multivariate Meixner polynomials of the form in the next Theorem.
}
\begin{thm}\label{Def:M}
Let $\bm{X}$ have a multivariate Meixner distribution $P(\bm{x};\alpha,\theta,\bm{p})$. A set of multivariate Meixner polynomials, orthogonal on $\bm{X}$ is 
\begin{eqnarray}
&&M_{\bm{n}}(\bm{X};\alpha,\theta,\bm{p},\bm{u})
\nonumber\\
&&=\frac{\Gamma (\alpha + n_0)}{\Gamma (\alpha)n_0!}M_{n_0}(|\bm{X}|-|\bm{n}_1|;\alpha, \theta/(1+\theta))Q_{\bm{n}_1}(\bm{X};|\bm{X}|,\bm{p},\bm{u}),
\label{Meixner:7}
\end{eqnarray}
where $\bm{n} \in \mathbb{Z}_+^d$, $\bm{n}_1 = (n_1,\ldots, n_{d-1})$, and
$Q_{\bm{n}_1}(\bm{X};|\bm{X}|,\bm{p},\bm{u})$ are multivariate Krawtchouk polynomials, orthogonal on the multinomial random variable $\bm{X}$ given $|\bm{X}|$. $\bm{u}$ is an orthogonal basis satisfying (\ref{basic:0}).
The orthogonality relationship is
\begin{eqnarray}
&&\mathbb{E}\Big [
M_{\bm{m}}(\bm{X};\alpha,\theta,\bm{p},\bm{u})
M_{\bm{n}}(\bm{X};\alpha,\theta,\bm{p},\bm{u})
\Big ]
\nonumber \\
&&~~= \delta_{\bm{m}\bm{n}}
\frac{\Gamma(|\bm{n}|+\alpha)}
{\Gamma(\alpha)n_0!\cdots n_{d-1}!}\Bigg (\frac{1+\theta}{\theta}\Bigg )^{n_0}\theta^{|\bm{n}|-n_0}
\prod_{j=1}^{d-1}a_j^{n_j}.
\label{Mei:0}
\end{eqnarray}
The multivariate Meixner polynomials are coefficients of
$w_0^{n_0}\ldots w_{d-1}^{n_{d-1}}$ in the generating function  \citep{G1975}
\begin{equation}
G_{\text{M}}(\bm{X},\bm{w},\alpha,\theta,\bm{u}) = (1-w_0)^{-(|\bm{X}|+\alpha)}\prod_{i=1}^d
\Big (1 - w_0\frac{1+\theta}{\theta} + \sum_{j=1}^{d-1}u_i^{(j)}w_j\Big )^{X_i}.
\label{Meixner:6}
\end{equation}
These polynomials have a representation as a mixture of the multivariate Poisson-Charlier polynomials:
\begin{eqnarray}
&&P(\bm{X};\alpha,\theta,\bm{p})
\Bigg (\frac{\theta}{1+\theta}\Bigg )^{n_0}
M_{\bm{n}}(\bm{X};\alpha,\theta,\bm{p},\bm{u})
\nonumber \\
&&~~=
\mathbb{E}_\mu\big [P(\bm{X};\mu,\bm{p})\mu^{|\bm{n}|}C_{\bm{n}}(\bm{X};\bm{\mu},\bm{u})\big ],
\label{Meixner:6}
\end{eqnarray}
where $\mu \equiv |\bm{\mu}|$.
\end{thm}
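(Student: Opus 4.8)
The plan is to establish the three assertions of the theorem---that $G_{\text{M}}$ generates the polynomials (\ref{Meixner:7}), that they satisfy the orthogonality relation (\ref{Mei:0}), and that they admit the Poisson--Charlier mixture representation---in that order, treating the orthogonality relation as the technical core and relying throughout on the conditional structure that $|\bm{X}|$ is negative binomial $(\alpha,\theta)$ while $\bm{X}\mid|\bm{X}|$ is multinomial $(|\bm{X}|,\bm{p})$. The key point, which I would flag at the outset, is that the bilinear generating-function computation certifying (\ref{Mei:0}) is self-contained and does not depend on the coefficient extraction, so it pins down the stated norms directly.

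First I would read the factored form (\ref{Meixner:7}) off $G_{\text{M}}$ by extracting coefficients in two stages, mirroring the Poisson--Charlier argument around (\ref{intermediate:0}). Writing $c=1-w_0(1+\theta)/\theta$, the product $\prod_i\big(c+\sum_j u_i^{(j)}w_j\big)^{X_i}$ equals $c^{|\bm{X}|}\prod_i\big(1+\sum_j u_i^{(j)}(w_j/c)\big)^{X_i}$, so by the Krawtchouk generating function (\ref{main_gf}) the coefficient of $w_1^{n_1}\cdots w_{d-1}^{n_{d-1}}$ is $c^{|\bm{X}|-|\bm{n}_1|}Q_{\bm{n}_1}(\bm{X};|\bm{X}|,\bm{p},\bm{u})$. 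Multiplying by $(1-w_0)^{-(|\bm{X}|+\alpha)}$ and comparing the $w_0$-dependence with the one-dimensional Meixner generating function (\ref{Meixner:1}) at $x=|\bm{X}|-|\bm{n}_1|$ and $\kappa=\theta/(1+\theta)$ identifies the factor $\tfrac{\Gamma(\alpha+n_0)}{\Gamma(\alpha)n_0!}M_{n_0}(|\bm{X}|-|\bm{n}_1|;\alpha,\kappa)$, which assembles into (\ref{Meixner:7}).

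The central step is the orthogonality relation (\ref{Mei:0}), which I would obtain exactly as in (\ref{Orthogcalc:0}) by computing the bilinear generating function $\mathbb{E}\big[G_{\text{M}}(\bm{X},\bm{z})G_{\text{M}}(\bm{X},\bm{w})\big]$. Conditioning on $N=|\bm{X}|$, the multinomial expectation of $\prod_i(A_iB_i)^{X_i}$, with $A_i,B_i$ the two bracketed factors of $G_{\text{M}}$, collapses to $\big(\sum_i p_iA_iB_i\big)^N$; the identities $\sum_i p_iu_i^{(j)}=0$ and $\sum_i p_iu_i^{(j)}u_i^{(j')}=\delta_{jj'}a_j$ from (\ref{basic:0}) annihilate all linear and mixed terms and leave $(1-\tilde z_0)(1-\tilde w_0)+\sum_j a_j z_jw_j$, where $\tilde z_0=z_0(1+\theta)/\theta$. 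Pulling the factors $(1-z_0)^{-|\bm{X}|}(1-w_0)^{-|\bm{X}|}$ into the kernel and averaging $\zeta^N$, with $\zeta=\big(\sum_i p_iA_iB_i\big)/[(1-z_0)(1-w_0)]$, against the negative binomial law of $N$ via its probability generating function $[1-\theta(\zeta-1)]^{-\alpha}$ (equivalently, integrating the Poisson imbedding over the Gamma$(\alpha,\theta)$ mixing law of (\ref{Meixner:5})), and then clearing the denominator $(1-z_0)(1-w_0)$ inside the bracket against the outer $(1-z_0)^{-\alpha}(1-w_0)^{-\alpha}$, the expression telescopes to the purely diagonal form
\[
\mathbb{E}\big[G_{\text{M}}(\bm{X},\bm{z})G_{\text{M}}(\bm{X},\bm{w})\big]
= \Big[\,1 - \tfrac{1+\theta}{\theta}\,z_0w_0 - \theta\sum_{j=1}^{d-1}a_j\,z_jw_j\,\Big]^{-\alpha}.
\]
Since this depends only on the products $z_0w_0$ and $z_jw_j$, orthogonality is immediate, and expanding $[1-\xi]^{-\alpha}=\sum_k\tfrac{\Gamma(\alpha+k)}{\Gamma(\alpha)k!}\xi^k$ by the multinomial theorem and reading off the coefficient of $\prod_j z_j^{n_j}w_j^{n_j}$ yields precisely the norm in (\ref{Mei:0}), including the $\big(\tfrac{1+\theta}{\theta}\big)^{n_0}\theta^{|\bm{n}|-n_0}\prod_j a_j^{n_j}$ factor.

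For the Poisson--Charlier mixture identity I would avoid term-by-term matching and instead multiply both sides by $\prod_j w_j^{n_j}$ and sum over $\bm{n}$. With $\bm{\mu}=\mu\bm{p}$ and $|\bm{\mu}|=\mu$, one has $\sum_{\bm{n}}\mu^{|\bm{n}|}C_{\bm{n}}(\bm{X};\bm{\mu},\bm{u})\prod_j w_j^{n_j}=G_{\text{PC}}(\bm{X},\mu\bm{w},\bm{u})$ and $\sum_{\bm{n}}(\theta/(1+\theta))^{n_0}M_{\bm{n}}\prod_j w_j^{n_j}=G_{\text{M}}\big(\bm{X},\tfrac{\theta}{1+\theta}w_0,w_1,\dots,w_{d-1}\big)$, so the claim reduces to a single generating-function identity. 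On the right, the rescaling $|\bm{\mu}|^{-1}\mu=1$ removes $\mu$ from the product in (\ref{MVPC:1}), leaving $e^{\mu w_0}\prod_i(1-w_0+\sum_j u_i^{(j)}w_j)^{X_i}$, whose residual $\mu$-dependence $e^{-\mu(1-w_0)}\mu^{|\bm{X}|}$ integrates against Gamma$(\alpha,\theta)$ to $\tfrac{\Gamma(\alpha+|\bm{X}|)}{\theta^\alpha\Gamma(\alpha)}\big(\theta/(1+\theta-\theta w_0)\big)^{|\bm{X}|+\alpha}$; on the left, $w_0\mapsto\tfrac{\theta}{1+\theta}w_0$ turns $w_0(1+\theta)/\theta$ into $w_0$, so both sides share $\prod_i(1-w_0+\sum_j u_i^{(j)}w_j)^{X_i}$ and the remaining scalars coincide after the elementary cancellation $\tfrac{\theta^{|\bm{X}|}}{(1+\theta)^{\alpha+|\bm{X}|}}(1-\tfrac{\theta}{1+\theta}w_0)^{-(|\bm{X}|+\alpha)}=\theta^{|\bm{X}|}(1+\theta-\theta w_0)^{-(|\bm{X}|+\alpha)}$.

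I expect the \emph{main obstacle} to be the telescoping in the orthogonality step: one must verify that the negative-binomial (equivalently Gamma--Poisson) average of the multinomial kernel genuinely eliminates every off-diagonal and every $w_0$-cross term, and that the outer powers of $(1-z_0)$ and $(1-w_0)$ cancel exactly against the factors produced by clearing denominators. This is the only place where the special value $\kappa=\theta/(1+\theta)$ and the coupling of the index $n_0$ to the sum $|\bm{X}|$ are essential; everything else is bookkeeping with the one-dimensional generating functions (\ref{Meixner:1}) and (\ref{PC:0}) and the multinomial orthogonality (\ref{normalizing}).
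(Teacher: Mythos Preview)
Your proposal is correct and follows essentially the same route the paper indicates: the paper omits the proof of this theorem, stating only that it ``follow[s] those for the multivariate Poisson--Charlier polynomials,'' and your two-stage coefficient extraction together with the bilinear generating-function computation $\mathbb{E}[G_{\text{M}}(\bm{X},\bm{z})G_{\text{M}}(\bm{X},\bm{w})]=[1-\tfrac{1+\theta}{\theta}z_0w_0-\theta\sum_{j\geq1}a_jz_jw_j]^{-\alpha}$ is precisely the Meixner analogue of the argument around (\ref{intermediate:0}) and (\ref{Orthogcalc:0}). The mixture identity with the Poisson--Charlier polynomials has no counterpart in Theorem~\ref{PCDEF:0}, so the paper gives no template there; your generating-function reduction (summing both sides against $\prod_j w_j^{n_j}$ and matching the Gamma integral of $e^{-\mu(1-w_0)}\mu^{|\bm{X}|}$ against the rescaled $G_{\text{M}}$) is a clean and natural way to supply the missing verification.
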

\begin{cor}\label{Corr:M}
The transform of the multivariate Meixner polynomials is
\begin{eqnarray}
M^*_{\bm{n}}(\bm{s};\alpha,\theta,\bm{p},\bm{u})
&\coloneqq&
\mathbb{E}\Big [\prod_{j=1}^ds_j^{X_j}
M_{\bm{n}}(\bm{X};\alpha,\theta,\bm{p},\bm{u})
\Big ]
\nonumber \\
&=& \frac{\Gamma (\alpha + |\bm{n}|)}
{\Gamma (\alpha)n_0!\cdots n_{d-1}!}
\big [1 - \theta (S_0-1)\big ]^{-(\alpha + |\bm{n}|)}
\nonumber \\
&&\times (1+\theta)^{n_0}\big (1-S_0\big )^{n_0}\theta^{|\bm{n}|-n_0}
\prod_{j=1}^{d-1}S_j^{n_j}.
\label{transform:1}
\end{eqnarray}
The transform of the 1-dimensional Meixner polynomials is
\begin{equation}
M_n^*(s;\alpha,\kappa) := \mathbb{E}\big [s^XM_n(X;\alpha,\kappa)\big ] 
= (1+\theta)^n(1-s)^n(1-\theta(s-1))^{-(\alpha + n)}.
\label{onedimMtrans}
\end{equation}
\end{cor}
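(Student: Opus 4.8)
The plan is to compute the full generating function of the transforms $M^*_{\bm{n}}$ in one stroke, exactly as the Poisson--Charlier transform (\ref{transform:0}) was obtained from $G_{\text{PC}}$. Apply the operator $\mathbb{E}\big[\prod_{j=1}^d s_j^{X_j}\,\cdot\,\big]$ to the generating function $G_{\text{M}}(\bm{X},\bm{w},\alpha,\theta,\bm{u})$ of Theorem~\ref{Def:M}. Writing $A_i := 1 - w_0\tfrac{1+\theta}{\theta} + \sum_{j=1}^{d-1}u_i^{(j)}w_j$, the summand is $\big(1-w_0\big)^{-(|\bm{X}|+\alpha)}\prod_{i=1}^d (p_i s_i A_i)^{X_i}$ weighted by the multivariate Meixner probabilities (\ref{Meixner:4}). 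Summing first over $\bm{X}$ with $|\bm{X}|=N$ held fixed collapses the inner sum by the multinomial theorem to $\big(\sum_i p_i s_i A_i\big)^N$, and then summing the negative binomial weight in $N$ through $\sum_{N\ge 0}\tfrac{\Gamma(\alpha+N)}{\Gamma(\alpha)N!}z^N=(1-z)^{-\alpha}$ gives the closed form
\[
\mathbb{E}\Big[\textstyle\prod_{j=1}^d s_j^{X_j}\,G_{\text{M}}\Big] \;=\; \Big[(1+\theta)(1-w_0) - \theta\textstyle\sum_{i=1}^d p_i s_i A_i\Big]^{-\alpha},
\]
after cancelling the prefactors $(1+\theta)^{-\alpha}(1-w_0)^{-\alpha}$.

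The remaining work is purely algebraic. Expand $\sum_i p_i s_i A_i = S_0 - \tfrac{1+\theta}{\theta}S_0 w_0 + \sum_{j=1}^{d-1}S_j w_j$, substitute, and use $(1+\theta)-\theta S_0 = 1-\theta(S_0-1)$ to put the base in the form
\[
[1-\theta(S_0-1)] \;-\; (1+\theta)(1-S_0)\,w_0 \;-\; \theta\textstyle\sum_{j=1}^{d-1}S_j w_j .
\]
Factoring out $[1-\theta(S_0-1)]$ and applying the multinomial expansion $[1-\sum_k z_k]^{-\alpha}=\sum \tfrac{\Gamma(\alpha+\sum_k m_k)}{\Gamma(\alpha)\prod_k m_k!}\prod_k z_k^{m_k}$, the coefficient of $w_0^{n_0}\cdots w_{d-1}^{n_{d-1}}$ is read off directly; collecting the powers of $\theta$, $(1+\theta)$, $(1-S_0)$ and the $S_j$ (the exponent of $\theta$ coming out as $|\bm{n}|-n_0$ and that of $[1-\theta(S_0-1)]$ as $-(\alpha+|\bm{n}|)$) reproduces the right-hand side of (\ref{transform:1}). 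The one-dimensional formula (\ref{onedimMtrans}) follows from the same reorganization applied directly to (\ref{Meixner:1}): from the negative binomial \emph{pgf}, $\mathbb{E}\big[s^X(1-z/\kappa)^X(1-z)^{-(X+\alpha)}\big]=(1-z)^{-\alpha}\big[1-\theta\big(\tfrac{s(1-z/\kappa)}{1-z}-1\big)\big]^{-\alpha}$, and clearing denominators with $\kappa=\theta/(1+\theta)$ brings the base to $[1-\theta(s-1)]-(1+\theta)(1-s)z$, whose coefficient of $z^n$ against the weight $\Gamma(\alpha+n)/(\Gamma(\alpha)n!)$ is (\ref{onedimMtrans}); this is the $d=1$ instance of (\ref{transform:1}) up to the normalizing factor $\Gamma(\alpha+n)/(\Gamma(\alpha)n!)$ carried by the definition (\ref{Meixner:7}).

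An equivalent and perhaps more natural route uses the mixture representation (\ref{Meixner:6}): multiply both sides by $\prod_j s_j^{x_j}$ and sum over $\bm{x}$, so that the left side becomes $\big(\tfrac{\theta}{1+\theta}\big)^{n_0}M^*_{\bm{n}}(\bm{s};\alpha,\theta,\bm{p},\bm{u})$ and the right side becomes $\mathbb{E}_\mu\big[\mu^{|\bm{n}|}C^*_{\bm{n}}(\bm{s};\bm{\mu},\bm{u})\big]$, where the Poisson--Charlier transform (\ref{transform:0}) applies with $\sum_i \mu_i(s_i-1)=\mu(S_0-1)$. The only computation left is the Gamma integral $\mathbb{E}_\mu\big[\mu^{|\bm{n}|}e^{\mu(S_0-1)}\big]$ for $\mu\sim$ Gamma$(\alpha,\theta)$, which equals $\tfrac{\Gamma(\alpha+|\bm{n}|)}{\Gamma(\alpha)}\theta^{|\bm{n}|}[1-\theta(S_0-1)]^{-(\alpha+|\bm{n}|)}$ once one simplifies $1/\theta-(S_0-1)=[1-\theta(S_0-1)]/\theta$; substituting and solving for $M^*_{\bm{n}}$ recovers (\ref{transform:1}). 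Neither route presents a genuine obstacle; the only delicate point is the bookkeeping among $\theta$, $\kappa=\theta/(1+\theta)$ and the factor $\tfrac{1+\theta}{\theta}$ in $G_{\text{M}}$, and recognizing the recurring block $1-\theta(S_0-1)$ so that the coefficient extraction comes out clean.
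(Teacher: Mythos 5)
Your first route---applying $\mathbb{E}\big[\prod_{j=1}^d s_j^{X_j}\,\cdot\,\big]$ to $G_{\text{M}}$, collapsing the multinomial and negative-binomial sums to $\big[(1+\theta)(1-w_0)-\theta\sum_i p_i s_i A_i\big]^{-\alpha}$, and extracting the coefficient of $w_0^{n_0}\cdots w_{d-1}^{n_{d-1}}$ after factoring out $1-\theta(S_0-1)$---is exactly the argument the paper intends: its proof is omitted with the remark that it follows the Poisson--Charlier case, and your computation is that parallel carried out correctly, including the bookkeeping of $\theta$, $1+\theta$ and the normalization $\Gamma(\alpha+n)/(\Gamma(\alpha)n!)$ in the one-dimensional formula. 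Your alternative derivation via the Gamma-mixture representation of the Meixner polynomials in terms of the Poisson--Charlier transform is also correct and provides a useful consistency check.
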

The proofs in Theorem \ref{Def:M} and Corollary \ref{Corr:M} follow those for the multivariate Poisson-Charlier polynomials so are omitted for brevity. 
\begin{rem}
The multivariate Poisson-Charlier polynomials can be obtained as a limit from the multivariate Meixner polynomials. In the generating function (\ref{Meixner:6}) set $w_0=w_0^\prime\theta/|\bm{\mu}|$,
and $w_j^\prime  = w_j/|\bm{\mu}|$, $j=1,\ldots, d-1$. Now let $\alpha \to \infty$, $\theta \to 0$ with $\alpha\theta \to |\bm{\mu}|$. The generating function for the multivariate Meixner polynomials (\ref{Meixner:6}) converges to the generating function for the multivariate Poisson polynomials (\ref{MVPC:1}) with dummy variables $\bm{w}^\prime$. Therefore
\[
M_{\bm{n}}(\bm{X};\alpha,\theta,\bm{p},\bm{u})\theta^{n_0}|\bm{\mu}|^{-|\bm{n}|} \to C_{\bm{n}}(\bm{X};\bm{\mu},\bm{u}).
\]

\end{rem}

\subsection{Multivariate negative binomial Lancaster expansions}
A characterization of bivariate multivariate negative binomial distributions whose eigenfunctions are the multivariate Meixner polynomials is more difficult than the Poisson and Normal cases.
\begin{thm}\label{MP:0}Let $\bm{u}$ be an orthogonal basis on $\bm{p}=\bm{\mu}/|\bm{\mu}|$ with (\ref{basic:0}) holding. Suppose the basis satisfies the hypergroup property (\ref{dhyper:1}).
\medskip

\noindent
Then
\begin{eqnarray}
&&P(\bm{x};\alpha,\theta,\bm{p})
P(\bm{y};\alpha,\theta,\bm{p})
\nonumber \\
&&~\times\Big \{
1 + \sum_{|\bm{n}|\geq 1}\rho_{\bm{n}}h_{\bm{n}}(\alpha,\theta,\bm{u})
M_{\bm{n}}(\bm{x};\alpha,\theta,\bm{p},\bm{u})
M_{\bm{n}}(\bm{y};\alpha,\theta,\bm{p},\bm{u})
\Big \},
\label{HM:0}
\end{eqnarray}
with $\{\rho_{\bm{n}}\}$ not depending on $\alpha$, is non-negative and thus a proper bivariate negative binomial distribution for all $\alpha >0$ if and only if
\begin{eqnarray}
\rho_{\bm{n}} = 
\mathbb{E}_{\bm{\xi}}\Big [
\prod_{j=0}^{d-1}\rho_j(\bm{\xi})^{n_j}\Big ],
\label{HM:1}
\end{eqnarray}
where
$\rho_j(\bm{\xi})=\sum_{i=1}^{d-1}u_i^{(j)}\xi_i$
and $\bm{\xi}$ is a random vector in the unit simplex satisfying a set of conditions.
Recall that 
\[
p_{ij}(\bm{\omega}) = p_ip_j\sum_{l=1}^d\omega_l\mathfrak{s}(i,j,l).
\]
where $\bm{\omega} = \bm{\xi}/|\bm{\xi}|$.
For non-random $\bm{\xi}$ the conditions are 
\begin{equation}
p_{ij}(\bm{\omega}) \geq \frac{\theta(1-|\bm{\xi}|)}{1 + \theta(1-|\bm{\xi}|)} p_ip_j,\>i,j = 1,\ldots, d.
\label{conditions:0}
\end{equation}
Sufficient conditions when $\bm{\xi}$ is random are also (\ref{conditions:0}) however we do not have a proof of the necessity of (\ref{conditions:0}) then.
\end{thm}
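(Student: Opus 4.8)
The plan is to mimic the structure of the proof of Theorem \ref{HG:2}, using the Poisson--Gamma mixture representation \eqref{Meixner:5} to transfer the Poisson result to the negative binomial setting, with extra care because the mixing measure on $\bm\xi$ is now required to be $\alpha$-independent.

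\textbf{Necessity.} Suppose \eqref{HM:0} holds for all $\alpha>0$ with an $\alpha$-free correlation sequence $\{\rho_{\bm n}\}$. Let $(\bm X,\bm Y)$ be a pair of multivariate negative binomial vectors with this joint law. As in \eqref{cc:0}, the multivariate Meixner polynomials are eigenfunctions: $\mathbb E[M_{\bm n}(\bm Y;\alpha,\theta,\bm p,\bm u)\mid\bm X=\bm x]=\rho_{\bm n}M_{\bm n}(\bm x;\alpha,\theta,\bm p,\bm u)$. Since $M_{\bm n}$ has a single leading term of degree $|\bm n|$ proportional to $m(\bm n,\bm x,\bm u)=\prod_{j=0}^{d-1}\bigl(\sum_{i=1}^{d-1}u_i^{(j)}x_i\bigr)^{n_j}$ (the Meixner factor $M_{n_0}(|\bm X|-|\bm n_1|;\alpha,\kappa)$ contributes the $U_0^{n_0}$ term), the same divide-by-$|\bm x|^{|\bm n|}$ argument as in Theorem \ref{HG:2} applies: let $|\bm x|\to\infty$ with $x_d/|\bm x|\to1$, $x_j/|\bm x|\to0$, and let $\bm\xi$ be the limit law of $\bm Y/|\bm x|$ given $\bm X=\bm x$ along a convergent subsequence. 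Using $u_d^{(j)}=1$ gives \eqref{HM:1}, and the bound $-1\le\rho_{\bm n'}\le1$ for $\bm n'=(n,0,\dots,0)$ forces $\mathbb E_{\bm\xi}[|\bm\xi|^n]\in[-1,1]$, so $|\bm\xi|$ lies in the unit simplex. That $\bm\xi$ must satisfy the extra constraints (a version of \eqref{conditions:0}) comes from requiring the extreme-point conditional law $\bm Y\mid\bm X=\bm x$ to be a genuine probability distribution: one computes its \emph{pgf} (the Meixner analogue of \eqref{CondPoisson:0}) and reads off that the per-individual offspring/type-change kernel is a mixture with weights $p_{ij}(\bm\omega)-\frac{\theta(1-|\bm\xi|)}{1+\theta(1-|\bm\xi|)}p_ip_j$, which must be nonnegative. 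For non-random $\bm\xi$ this is exactly \eqref{conditions:0}; for random $\bm\xi$ only a sufficiency claim is made, as stated.

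\textbf{Sufficiency.} Assume \eqref{HM:1} together with \eqref{conditions:0}. The strategy is to write the candidate joint \emph{pgf} of $(\bm X,\bm Y)$ as
\[
\sum_{\bm n:|\bm n|\ge0}\rho_{\bm n}h_{\bm n}(\alpha,\theta,\bm u)M^*_{\bm n}(\bm s;\alpha,\theta,\bm p,\bm u)M^*_{\bm n}(\bm t;\alpha,\theta,\bm p,\bm u),
\]
substitute the transform \eqref{transform:1} from Corollary \ref{Corr:M}, interchange the sum with $\mathbb E_{\bm\xi}$, and sum the resulting power series over $\bm n$ in closed form. Because $h_{\bm n}$ and $M^*_{\bm n}$ factor across the index $j=0,\dots,d-1$, the inner sum is a product of $d$ geometric-type series: the $j=0$ factor is a $1$-dimensional Meixner bilinear sum evaluated with correlation $\rho_0(\bm\xi)=|\bm\xi|$, and for $j\ge1$ the factor involves $\rho_j(\bm\xi)S_jT_j$. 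Alternatively, and more transparently, I would route through the Poisson result: by \eqref{Meixner:6}, $M_{\bm n}$ relates to $C_{\bm n}$ via a Gamma mixture over $\mu=|\bm\mu|$, so \eqref{HM:0} is a Gamma mixture of bivariate-Poisson expansions \eqref{H:0}. For a \emph{fixed} $\bm\xi$, Theorem \ref{HG:2} already shows the bivariate-Poisson object with that $\bm\xi$ is a \emph{pgf}, with representation \eqref{REIC:0}: $X_j=Z_j+\sum_k Z_{jk}$, $Y_k=Z_k'+\sum_j Z_{jk}$ with Poisson means $(1-|\bm\xi|)\bm\mu$ and $|\bm\mu||\bm\xi|p_{jk}(\bm\xi)$. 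Mixing $\mu$ over Gamma$(\alpha,\theta)$ turns these Poisson building blocks into negative binomial ones, but crucially the $Z_{jk}$'s share the \emph{same} $\mu$, so the common random scale couples $\bm X$ and $\bm Y$; one must then recombine. The role of \eqref{conditions:0} is precisely to guarantee that after this Gamma mixing the resulting array still has a valid joint law — equivalently, that the conditional kernel $\bm Y\mid\bm X$ factors into nonnegative pieces, the "death'' part contributing the $\frac{\theta(1-|\bm\xi|)}{1+\theta(1-|\bm\xi|)}p_ip_j$ threshold. Finally one integrates over the law of $\bm\xi$ to obtain the general \emph{pgf}.

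\textbf{Main obstacle.} The hard part is the sufficiency direction, specifically controlling what the Gamma mixing does to the conditional structure. In the Poisson case the conditional \emph{pgf} \eqref{CondPoisson:0} splits cleanly into an immigration factor and a product of per-site factors $\bigl(1-|\bm\xi|+|\bm\xi|\sum_j p_{j\mid i}(\bm\omega)t_j\bigr)^{x_i}$, each manifestly a \emph{pgf}. After a Gamma$(\alpha,\theta)$ mixture the analogous conditional \emph{pgf} is no longer a simple product of powers; the exponent $|\bm X|+\alpha$ in \eqref{Meixner:6} ties the sites together, and establishing nonnegativity of the resulting bivariate negative binomial kernel requires the inequality \eqref{conditions:0}, which is what lets one rewrite the kernel as a negative-binomial-weighted mixture of multinomial trials with nonnegative cell probabilities $p_{ij}(\bm\omega)-\frac{\theta(1-|\bm\xi|)}{1+\theta(1-|\bm\xi|)}p_ip_j$. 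Verifying that this rewriting is exhaustive (that these pieces are the only contributions and they sum correctly) and that the $\alpha$-independence of $\{\rho_{\bm n}\}$ is consistent across all $\alpha>0$ is the delicate bookkeeping; the hypergroup property of $\bm u$, as in Theorem \ref{HG:2}, is used throughout to keep $|u_i^{(j)}|\le1$ and the $\mathfrak s(i,j,l)$ nonnegative.
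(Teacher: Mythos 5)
Your skeleton matches the paper's: necessity of \eqref{HM:1} by the leading-term/limit argument carried over verbatim from Theorem \ref{HG:2}, and sufficiency by summing the bilinear series in closed form via the transforms of Corollary \ref{Corr:M} and then factoring the conditional \emph{pgf}. However, the decisive step is exactly the computation you defer as ``delicate bookkeeping'': the paper evaluates the conditional \emph{pgf} explicitly as \eqref{HM:2}, rewrites each per-site factor in the form \eqref{HM:3}, namely $1+\beta\big(\sum_j p_{j\mid i}(\bm\omega)t_j-1\big)/(1-\kappa T_0)$ with $\kappa=\theta(1-|\bm\xi|)/(1+\theta(1-|\bm\xi|))$, and then computes the coefficient of $\prod_j t_j^{n_j}$ in \eqref{HM:4}, showing it is nonnegative for \emph{all} $\bm n$ precisely when $p_{j\mid i}(\bm\omega)\ge\kappa p_j$, i.e.\ \eqref{conditions:0}. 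Without that coefficient analysis you have neither the sufficiency of \eqref{conditions:0} nor its necessity for non-random $\bm\xi$; asserting that the kernel ``reads off'' as a mixture with weights $p_{ij}(\bm\omega)-\kappa p_ip_j$ is the conclusion, not a proof of it. A small related slip: $h_{\bm n}(\alpha,\theta,\bm u)$ and $M^*_{\bm n}$ each carry the factor $\Gamma(\alpha+|\bm n|)/\Gamma(\alpha)$, which couples the indices $j$, so the inner sum is a negative-multinomial sum $\sum_{\bm n}\frac{\Gamma(\alpha+|\bm n|)}{\Gamma(\alpha)\prod n_j!}\prod z_j^{n_j}=(1-|\bm z|)^{-\alpha}$, not a product of $d$ geometric series.

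The route you call ``more transparent'' --- Gamma-mixing the bivariate Poisson result of Theorem \ref{HG:2} over a common $\mu$ --- does not deliver \eqref{HM:0} as stated. Because both marginal factors $P(\bm x;\mu,\bm p)C_{\bm n}(\bm x;\bm\mu,\bm u)$ and $P(\bm y;\mu,\bm p)C_{\bm n}(\bm y;\bm\mu,\bm u)$ depend on the shared $\mu$, the expectation of their product over $\mu\sim\mathrm{Gamma}(\alpha,\theta)$ does not factor into a product of Meixner terms via \eqref{Meixner:6}; the shared scale injects extra correlation (already visible at $\bm\xi=\bm 0$, where the Gamma mixture leaves $|\bm X|$ and $|\bm Y|$ dependent), so the resulting eigenvalues are not $\prod_j\rho_j(\bm\xi)^{n_j}$. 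You flag that ``one must then recombine'' but do not do so, and it is not clear the recombination can be made to produce an $\alpha$-free $\{\rho_{\bm n}\}$. The direct summation route is the one that works, and its core --- \eqref{HM:2}--\eqref{HM:4} --- is what is missing from your proposal.
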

\begin{proof}
The proof of the necessity of (\ref{HM:1}) follows that in the Poisson case in Theorem \ref{HG:2}.
\medskip
The potential \emph{pgf} of $\bm{Y}$ given $\bm{X} = \bm{x}$ in (\ref{HM:0}) is
\begin{eqnarray}
&&\mathbb{E}\Big [\prod_{j=1}^d t_j^{Y_j}\mid \bm{X}=\bm{x}\Big ]
\nonumber \\
&&~=
\mathbb{E}_{\bm{\xi}}\sum_{|\bm{n}|\geq 0}\rho_{|\bm{n}|}h_{\bm{n}}(\alpha,\theta,\bm{u})
M_{\bm{n}}(\bm{x};\alpha,\theta,\bm{p},\bm{u})
M^*_{\bm{n}}(\bm{t};\alpha,\theta,\bm{p},\bm{u})
\nonumber \\
&&~= 
\big (1-\theta(T_0-1)\big )^{-\alpha}
\mathbb{E}_{\bm{\xi}}\Bigg [
\Bigg ( 1 - \frac{\theta |\bm{\xi}|(1-T_0)}{1-\theta (T_0-1)} \Bigg )^{-(\alpha + |\bm{x}|)} 
\nonumber \\
&&~~~\times 
\prod_{i=1}^d
\Bigg (1 + \frac{-(1+\theta)|\bm{\xi}|(1-T_0) + \sum_{j=1}^{d-1}\rho_j(\bm{\xi})a_j^{-1}T_j}{1-\theta(T_0-1)}\Bigg )^{x_i}\Bigg ]
\nonumber \\
&&~=
\mathbb{E}_{\bm{\xi}}\Bigg [\big (1-\theta(1-|\bm{\xi}|)(T_0-1)\big )^{-(\alpha + |\bm{x}|)}
\nonumber \\
&&~\times 
\prod_{i=1}^d
\Big ( 1 - (\theta - |\bm{\xi}|(1+\theta))(T_0-1) + |\bm{\xi}|\sum_{j=1}^d(p_{j\mid i}(\bm{\omega}) - p_j)t_j\Big )^{x_i}\Bigg ].~~~
\label{HM:2}
\end{eqnarray}
If $\bm{\xi}$ is non-random a necessary and sufficient condition that (\ref{HM:2}) be a \emph{pgf} for all $\bm{x} \in \mathbb{Z}_+^d$ and $\alpha > 0$ is that for each $i=1,\ldots ,d$ 
\begin{eqnarray}
&&\frac{
1 - (\theta - |\bm{\xi}|(1+\theta))(T_0-1) + |\bm{\xi}|\sum_{j=1}^d(p_{j\mid i}(\bm{\omega}) - p_j)t_j
}
{
1 - (\theta - |\bm{\xi}|(1+\theta))(T_0-1)
}
\nonumber \\
&&~~= 
1 + \frac{|\bm{\xi}|}{1+\theta(1-|\bm{\xi}|)}\cdot \frac{\sum_{j=1}^dp_{j\mid i}(\bm{\omega})t_j - 1}{1 - \kappa T_0}
\label{HM:3}
\end{eqnarray}
are \emph{pgf}s, where 
\[
\beta = \frac{|\bm{\xi}|}{1+\theta(1-|\bm{\xi}|)},\> \kappa = \frac{\theta (1-|\bm{\xi}|)}{1 + \theta (1-|\bm{\xi}|)}.
\]
The constant term in (\ref{HM:3}) is equal to
\[
\frac{(1-|\bm{\xi}|)(1+\theta)}{1+\theta(1-|\bm{\xi}|)} = \frac{1+\theta}{\theta}\kappa \geq 0.
\]
 The coefficient of $\prod_{j=1}^dt_j^{n_j}$ for $|\bm{n}| > 0$ in (\ref{HM:3}) is $\beta$ times
\begin{eqnarray}
&&\kappa^{|{\bm{n}}|-1}
\sum_{j=1}^dp_{j\mid i}(\bm{\omega})
{ |\bm{n}| - 1\choose \bm{n}-e_j}\prod_{l=1}^dp_l^{n_l-\delta_{lj}}
-\kappa^{|\bm{n}|}{ |\bm{n}| - 1\choose \bm{n}}\prod_{l=1}^dp_l^{n_l}
\nonumber \\
&&=
\kappa^{|\bm{n}|}{ |\bm{n}|\choose \bm{n}}\prod_{l=1}^dp_l^{n_l}
\Big \{\kappa^{-1}\sum_{j=1}^dn_jp_{j\mid i}(\bm{\omega})/p_j - 1 \Big \}.
\label{HM:4}
\end{eqnarray}
The term in the brackets in (\ref{HM:4}) is non-negative for all $\bm{n}\in \mathbb{Z}_+^d$ if and only if (\ref{conditions:0}) holds.
We cannot argue the necessity in the same way if $\bm{\xi}$ is random.
\end{proof}
\begin{rem}
A sufficient condition for (\ref{conditions:0}) to hold is that for
$i,j=1,\ldots ,d$
\begin{equation}
p_{ij}(\bm{\omega}) \geq \frac{\theta}{1 + \theta} p_ip_j.
\label{extracond:0}
\end{equation}
\end{rem}
\begin{rem}
The joint \emph{pgf} of $(\bm{X},\bm{Y})$
with \emph{pgf} (\ref{HM:0}) is
\begin{eqnarray}
&&\mathbb{E}_{\bm{\xi}}\Big \{(1-\theta (S_0-1))(1 - \theta (T_0-1))
\nonumber \\
&&~~ - |\bm{\xi}|\theta(1+\theta)(1-S_0)(1-T_0) 
- \theta|\bm{\xi}|\sum_{j=1}^{d}(p_{ij}(\omega)-p_ip_j)s_it_j\Big \}^{-\alpha}
\label{jt:0}
\end{eqnarray}
Three particular cases are:
\begin{enumerate}
\item
$|\bm{\xi|} = 0$; then $\bm{X}$ and $\bm{Y}$ are independent.
\item
$\bm{|\xi|} = 1$; then (\ref{jt:0}) is equal to 
$
\mathbb{E}_{\bm{\omega}}
\big (1 + \theta - \theta \sum_{i,j=1}^dp_{ij}(\bm{\omega})s_it_j\big )^{-\alpha}
$
which corresponds to a bivariate multinomial distribution where the number of trials $N$ has a 1-dimensional negative binomial distribution. The conditions (\ref{extracond:0}) are then always satisfied assuming that $\bm{u}$ has the hypergroup property.
\item
$p_{ij}(\bm{\omega}) = p_ip_j$, $i,j=1,\ldots ,d$; then $\bm{X}$ and $\bm{Y}$ are conditionally independent given $(|\bm{X}|,|\bm{Y}|)$.
\end{enumerate}
\end{rem}
\begin{cor}
$(\bm{X},\bm{Y})$ has a distribution with a Lancaster expansion (\ref{HM:0}) if and only if, in the extreme points, the conditional \emph{pgf} of $\bm{Y}|\bm{X}=\bm{x}$ has the form
\begin{equation}
\frac{(1-\kappa)^\alpha}{\big (1 - \kappa\sum_{j=1}^dp_jt_j\big )^{\alpha}}
\prod_{i=1}^d\Bigg (1 - \beta 
+ \frac{\beta(1-\kappa)\sum_{j=1}^dq_{j\mid i}t_j}{1 - \kappa\sum_{j=1}^dp_jt_j}\Bigg )^{x_i},~~
\label{HM:5}
\end{equation}
where $0 \leq \beta \leq 1$, $\kappa = (1-\beta)\theta/(1+\theta)$ and
$Q=(q_{j\mid i})$ is a transition matrix, reversible with respect to $\bm{p}$, with eigenvalues $\bm{u}$. 
\end{cor}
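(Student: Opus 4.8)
The plan is to massage the extreme-point conditional \emph{pgf} derived in Theorem~\ref{MP:0}, equation~(\ref{HM:2}), into the form~(\ref{HM:5}), identifying the parameters $\beta$, $\kappa$ and the transition matrix $Q$ along the way. The ``only if'' direction starts from the observation, already in the proof of Theorem~\ref{MP:0}, that for an extreme point (non-random $\bm{\xi}$) the conditional \emph{pgf} of $\bm{Y}\mid \bm{X}=\bm{x}$ is the product of the scalar factor $(1-\theta(1-|\bm{\xi}|)(T_0-1))^{-(\alpha+|\bm{x}|)}$ and a product over $i$ of the factors rewritten in~(\ref{HM:3}). First I would pull the ``absorbing'' part of each $i$-factor out so as to write the whole conditional \emph{pgf} as $(1-\kappa T_0)^{-\alpha}$ (the denominators $(1-\kappa T_0)^{|\bm{x}|}$ coming from the $i$-factors combine with the $(1-\theta(1-|\bm{\xi}|)(T_0-1))^{-(\alpha+|\bm{x}|)}$ prefactor, using $1-\kappa T_0 = (1-\theta(1-|\bm{\xi}|)(T_0-1))/(1+\theta(1-|\bm{\xi}|))$ up to the harmless constant $(1-\kappa)^{\alpha}$ normalising the \emph{pgf} to value $1$ at $\bm{t}=\bm{1}$). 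After that rearrangement the $i$-th factor is exactly $\bigl(1-\beta + \beta(1-\kappa)\sum_j q_{j\mid i}t_j/(1-\kappa T_0)\bigr)^{x_i}$ with $\beta = |\bm{\xi}|/(1+\theta(1-|\bm{\xi}|))$, $\kappa = \theta(1-|\bm{\xi}|)/(1+\theta(1-|\bm{\xi}|))$, and $q_{j\mid i} = p_{j\mid i}(\bm{\omega}) = p_{ij}(\bm{\omega})/p_i$; one checks $\kappa = (1-\beta)\theta/(1+\theta)$ directly from the definitions of $\beta$ and $\kappa$, and $T_0 = \sum_j p_j t_j$ by definition.

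Next I would verify the claimed properties of the ingredients. That $0\le\beta\le 1$ follows because $|\bm{\xi}|\in[0,1]$ and $\theta>0$ force $|\bm{\xi}| \le 1 \le 1+\theta(1-|\bm{\xi}|)$. That $Q=(q_{j\mid i})$ is a transition matrix reversible with respect to $\bm{p}$ is exactly the content of~(\ref{Lrandom:0}): $p_{ij}(\bm{\omega}) = p_ip_j\{1+\sum_r \theta_r a_r^{-1}u_i^{(r)}u_j^{(r)}\}$ is symmetric and non-negative by the hypergroup property, so $q_{j\mid i} = p_{ij}(\bm{\omega})/p_i$ has row sums $1$ (using $\sum_i p_i \mathfrak{s}(i,j,l)=1$, equivalently orthogonality of $\bm{u}$), satisfies $p_i q_{j\mid i} = p_j q_{i\mid j}$, and has the $u^{(r)}$ as right eigenvectors with eigenvalues $\theta_r = \sum_l (\xi_l/|\bm{\xi}|)u_l^{(r)}$. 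So the eigenfunctions of $Q$ are indeed $\bm{u}$, as asserted (``with eigenvalues $\bm{u}$'' in the statement should read ``with eigenfunctions $\bm{u}$'').

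For the ``if'' direction I would run the computation backwards: given $\beta\in[0,1]$, the corresponding $\kappa=(1-\beta)\theta/(1+\theta)$, and any $\bm{p}$-reversible transition matrix $Q$ with eigenfunctions $\bm{u}$, set $|\bm{\xi}|$ by inverting $\beta = |\bm{\xi}|/(1+\theta(1-|\bm{\xi}|))$ (a monotone bijection of $[0,1]$ onto $[0,1]$), and let $\bm{\omega}=\bm{\xi}/|\bm{\xi}|$ be determined by the eigenvalues of $Q$ via $\theta_r = \sum_l \omega_l u_l^{(r)}$. Expanding~(\ref{HM:5}) as a power series in $\bm{t}$ and comparing with the diagonal expansion~(\ref{HM:0}) — using the transform of the multivariate Meixner polynomials from Corollary~\ref{Corr:M} — recovers $\rho_{\bm{n}}=\prod_{j=0}^{d-1}\rho_j(\bm{\xi})^{n_j}$ for this single extreme point, and a general mixture over $\bm{\xi}$ gives~(\ref{HM:1}); since~(\ref{HM:5}) is manifestly a \emph{pgf} (a product of \emph{pgf}s: the $(1-\kappa)^{\alpha}(1-\kappa T_0)^{-\alpha}$ factor is a negative-binomial \emph{pgf}, and each bracketed factor is a convex combination of $1$ and a \emph{pgf}), non-negativity of~(\ref{HM:0}) is immediate, closing the equivalence. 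The main obstacle I anticipate is purely bookkeeping: tracking the normalising constants so that both the scalar negative-binomial factor and each $i$-factor are separately genuine \emph{pgf}s (value $1$ at $\bm{t}=\bm{1}$), and confirming that the substitution $1-\kappa T_0$ absorbs the $|\bm{x}|$-dependent exponent correctly; this is the same algebra that already appears between lines~(\ref{HM:2}) and~(\ref{HM:3}) in the proof of Theorem~\ref{MP:0}, so it is routine, just delicate.
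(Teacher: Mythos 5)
Your overall route is the paper's: rewrite the extreme-point conditional \emph{pgf} (\ref{HM:2}) by dividing each $i$-factor by $1+\theta(1-|\bm{\xi}|)$ and then by $1-\kappa T_0$, with $\beta = |\bm{\xi}|/(1+\theta(1-|\bm{\xi}|))$ and $\kappa = \theta(1-|\bm{\xi}|)/(1+\theta(1-|\bm{\xi}|)) = (1-\beta)\theta/(1+\theta)$. But your identification of the transition matrix is wrong: you set $q_{j\mid i} = p_{j\mid i}(\bm{\omega})$, whereas the algebra forces the affine shift
\[
q_{j\mid i} \;=\; \frac{p_{j\mid i}(\bm{\omega}) - \kappa p_j}{1-\kappa}.
\]
To see this, note the $i$-th bracket after the first division is $1-\beta-\kappa T_0+\beta\sum_j p_{j\mid i}(\bm{\omega})t_j$, and matching this against $(1-\beta)(1-\kappa T_0)+\beta(1-\kappa)\sum_j q_{j\mid i}t_j$ coefficient by coefficient in $t_j$ gives exactly the shifted $q_{j\mid i}$; with your choice the two expressions differ by $\beta\kappa\sum_j\big(p_j - p_{j\mid i}(\bm{\omega})\big)t_j$, which vanishes only in the independence case. (Both choices pass the check at $\bm{t}=\bm{1}$, which is why the error is easy to miss.)

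This is not merely a bookkeeping slip, because it changes the content of the corollary. With the correct $q_{j\mid i}$, non-negativity of $Q$ is equivalent to $p_{ij}(\bm{\omega}) \geq \kappa\, p_ip_j$, i.e.\ precisely condition (\ref{conditions:0}) of Theorem \ref{MP:0}; under your identification $Q\geq 0$ would follow from the hypergroup property alone and the extra negative-binomial constraint would silently disappear. The eigenvalues of the correct $Q$ on the eigenvectors $u^{(r)}$ are $\theta_r/(1-\kappa)$, not $\theta_r$, so your inverse map in the ``if'' direction (recovering $\bm{\omega}$ from the eigenvalues of $Q$) also needs the corresponding rescaling, and the reconstruction $p_{j\mid i}(\bm{\omega}) = (1-\kappa)q_{j\mid i} + \kappa p_j$ rather than $p_{j\mid i}(\bm{\omega}) = q_{j\mid i}$. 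The rest of your argument --- the normalising constants, $0\leq\beta\leq 1$, reversibility of $Q$ with respect to $\bm{p}$, the remark that ``eigenvalues $\bm{u}$'' should read ``eigenvectors $\bm{u}$'', and the observation that (\ref{HM:5}) is manifestly a product of \emph{pgf}s --- is sound once $Q$ is fixed.
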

\begin{proof}
In Theorem \ref{MP:0}, the conditional \emph{pgf} (\ref{HM:2}) 
and simplification (\ref{HM:3})
, make the 1-1 mapping 
\begin{equation}
\beta = \frac{|\bm{\xi}|}{1+\theta(1-|\bm{\xi}|)},\> 
\kappa = \frac{\theta(1-|\bm{\xi}|)}{1+\theta(1-|\bm{\xi}|)}
=\frac{(1-\beta)\theta}{1+\theta},\>
q_{j\mid i} = \frac{p_{j\mid i}(\omega)-\kappa p_j}{1-\kappa}.
\label{map:00}
\end{equation}
with $\bm{p}$ fixed.
$Q$ is a transition matrix since it has rows sums unity and is non-negative because of (\ref{extracond:0}).  In the inverse map to (\ref{map:00}) any $0 \leq \beta \leq 1$, $0 < \kappa < 1 - \beta$ and $Q$ determine
\begin{equation}
 \theta = \frac{\kappa}{1-\beta-\kappa},\> 
|\bm{\xi}| = \frac{\beta}{1-\kappa},\>
 p_{j\mid i}(\omega) = (1-\kappa)q_{j\mid i} + \kappa p_j
 \label{conditions:1}
 \end{equation}
 satisfying (\ref{conditions:0}).
 The calculation showing (\ref{HM:2}) is equivalent to (\ref{HM:5}) follows.
 Divide by $1+\theta(1-|\bm{\xi}|)$ within the brackets raized to the powers $x_i$ and express $p_{j\mid i}(\omega)$ as in (\ref{conditions:1}) to show that the \emph{pgf} is
\begin{eqnarray}
&&\big ( 1 - \theta(1-|\bm{\xi})(T_0-1)\big )^{-\alpha}
\Big (1 - \frac{\theta(1-|\bm{\xi}|)}
{1+\theta(1-|\bm{\xi}|)}
T_0 \Big )^{-|\bm{x}|}
\nonumber \\
&&\times \prod_{i=1}^d\Big (
1 - \frac{|\bm{\xi}|}{1+\theta(1-|\bm{\xi}|)}
 - \frac{\theta(1-|\bm{\xi}|)}{1+\theta(1-|\bm{\xi}|)}T_0
 \nonumber \\
&&~~+ \frac{|\bm{\xi}|(1-\kappa)}{1+\theta(1-|\bm{\xi}|)}\sum_{j=1}^dq_{j\mid i}t_j + \frac{\kappa|\bm{\xi}|}{1+\theta(1-|\bm{\xi}|)}T_0 \Big )^{x_i} 
\nonumber \\
&&=(1-\kappa)^{\alpha}(1-\kappa T_0)^{-\alpha}(1-\kappa T_0)^{-|\bm{x}|}
\nonumber \\
&&\times \Big ( (1-\beta)(1-\kappa T_0) + \beta (1-\kappa)\sum_{j=1}^dq_{j\mid i}t_j\Big )^{x_i}
\label{calc:10}
\end{eqnarray}
(\ref{HM:5}) now follows from (\ref{calc:10}) by dividing inside the bracketed terms by $1-\kappa T_0$.
\end{proof}
\begin{rem}An interpretation of the extreme points in the conditional distribution of $\bm{Y}\mid \bm{X}=\bm{x}$ with a \emph{pgf} 
(\ref{HM:5}) is as the transition functions in a discrete time Markov chain. Consider a population of individuals of $d$ types with configuration $\bm{x}$. In a transition each individual dies with probability $1-\beta$, or gives birth with probability $\beta$ according to the multivariate negative binomial \emph{pgf} $(1-\kappa)(1-\kappa \sum_1^dp_it_t)^{-1}$. Parents then change their types according to the transition matrix $Q$. Immigration occurs according to the \emph{pgf} $(1-\kappa)^\alpha(1-\kappa \sum_1^dp_it_t)^{-\alpha}$. The resulting population configuration is then $\bm{Y}$.
The stationary distribution of the Markov chain is multivariate Meixner with parameters $\alpha$, $\theta$, $\bm{p}$ specified by
(\ref{conditions:1}).
If $(\beta,Q)$ are random it may be possible that $Q$ has negative entries but the expected value of (\ref{HM:5}) is still a \emph{pgf}.
\end{rem}
\begin{rem}
A continuous time reversible Markov process $\{X(t)\}_{t\geq 0}$ with a stationary multivariate Meixner distribution and multivariate Meixner polynomial eigenfunctions can be constructed similarly to the multivariate Poisson in Remark \ref{ContinuousPoisson}; we omitt details of the calculation. Choose $\bm{\xi}$ so that (\ref{small:0}) - (\ref{calculation:0}) hold. 
$\{\bm{X}(t)\}_{t\geq 0}$ can be identified as a multitype birth and death process in a population of individuals of $d$ types; immigrants arrive at rates $\theta{\nu}\alpha p_j$; births occur to individuals at rates ${\nu}\theta p_j$, not depending on parental type; deaths occur to individuals at rate ${\nu}(1+\theta)$; and changes of type from $i$ to $j$ occur at rate $\widetilde{r}_{j\mid i} = {\bm{\gamma}}\widetilde{p}_{j\mid i} - \theta {\nu}p_j\geq 0$. The stationary distribution is multivariate Meixner. $|\bm{X}(t)|$ is a conventional birth and death process with birth rates when $|\bm{X}(t)|=|\bm{x}|$ of
$\theta {\nu}(\alpha + |\bm{x}|)$ and death rates ${\nu}(1+\theta )|\bm{x}|$ with a negative binomial stationary distribution. The spectral expansion for the transition functions of $|\bm{X}(t)|$ in terms of the 1-dimensional Meixner polynomials is well known, see for example \citet{S2000} p34.

\end{rem}
\section{Multivariate normal}
\subsection{Multivariate Hermite-Chebycheff polynomials}
A natural generalization of a product set of Hermite-Chebycheff polynomials, orthogonal on independent normal random variables, is to consider product sets of Hermite-Chebycheff polynomials in random variables which follow an orthogonal transformation in the original variables. The transformed variables are again independent. For clarity the details are shown in Theorem \ref{HCconst}. Of course the idea is straightforward and will be well known. \textcolor{black}{ A characterization of Lancaster distributions with these orthogonal polynomials as eigenfunctions in the following Section \ref{HCL} is new and not straightforward. }

Let $X$ be a $N(0,\tau)$ random variable.
The Hermite-Chebycheff polynomials, $\big \{H_n(X;\tau)\big \}_{n=0}^\infty$, orthogonal on $X$ which is $N(0,\tau)$  have a generating function
\begin{equation}
\sum_{n=0}^\infty H_n(X;\tau)\frac{z^n}{n!} = \exp \big  \{xz-\frac{1}{2}\tau z^2\big \}.
\label{HC:0}
\end{equation}
The orthogonality relationship is
\[
\mathbb{E}\big [H_m(X;\tau)H_n(X;\tau)\big ]
 = \delta_{mn}n!\tau^n.
\]
Let $\bm{X}=(X_1,\ldots,X_d)$ be independent normal random variables with variances $\bm{\tau} = (\tau_1,\ldots,\tau_d)$ and density
\begin{equation}
f(\bm{x};\bm{\tau}) = \prod_{i=1}^d\frac{1}{\sqrt{2\pi\tau_i}}
e^{-\frac{1}{2\tau_i}x_i^2},\> \bm{x}\in \mathbb{R}^d.
\label{NDen:0}
\end{equation}
 A set of multivariate Hermite-Chebycheff polynomials on $\bm{X}$ is the product set
$\bigotimes_{i=1}^d \big \{H_{n_i}(X_i;\tau_i)\big \}_{n_i=0}^\infty$.
Denote $\bm{p}=\bm{\tau}/|\bm{\tau}|$ and let $\bm{u}$
 be an orthogonal basis on 
$\bm{p}$, satisfying (\ref{basic:0}). Another set is obtained by considering the mapping 
\begin{equation}
\bm{X} \to \widehat{\bm{X}}=\Big (\sum_{i=1}^du_i^{(j)}X_i\Big )_{j=0}^{d-1}
\label{map:0}
\end{equation}
which preserves normality and independence because
\[
\text{Cov}\big (\widehat{X}_j,\widehat{X}_k\big )
= \sum_{i=1}^du_i^{(j)}u_i^{(k)}\tau_i = |\bm{\tau}|\delta_{jk}a_k,
\>j,k=0,1,\ldots ,d-1. 
\]
\begin{thm}\label{HCconst}
The set of multivariate Hermite-Chebycheff polynomials associated with the mapping (\ref{map:0}) is
\begin{equation}
H_{\bm{n}}(\bm{X};\bm{\tau},\bm{u}) = 
\prod_{j=0}^{d-1}H_{n_j}\big (\widehat{X}_j;|\bm{\tau}| a_j\big ),
\label{HC:1}
\end{equation}
where $\bm{n}_1 = (n_1,\ldots, n_{d-1})$.\\
The orthogonality relationship is
\begin{equation}
\mathbb{E}\Big [H_{\bm{m}}(\bm{X};\bm{\tau},\bm{u})H_{\bm{n}}(\bm{X};\bm{\tau},\bm{u})\Big ]
=
\delta_{\bm{m}\bm{n}}|\bm{\tau}|^{|\bm{n}|}\prod_{j=0}^{d-1}a_j^{n_j}
\coloneqq \delta_{\bm{m}\bm{n}}h_{\bm{n}}(|\bm{\tau}|,\bm{u})^{-1}.
\label{HCh:0}
\end{equation}
A generating function for the polynomials (\ref{HC:1}) is
\begin{eqnarray}
G_{\text{HC}}(\bm{X},\bm{w},\bm{u}) &=& 
\sum_{\bm{n}\geq 0}H_{\bm{n}}(\bm{X};\bm{\tau},\bm{u})
\prod_{j=0}^{d-1}\frac{w_j^{n_j}}{n_j!}
\nonumber \\
&=& \exp \Bigg \{\sum_{j=0}^{d-1}\widehat{X}_jw_j
- \frac{1}{2}|\bm{\tau}| \sum_{j=0}^{d-1}a_jw_j^2\Bigg \}.
\label{HC:2}
\end{eqnarray}
\end{thm}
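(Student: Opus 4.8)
The plan is to reduce the whole statement to one-dimensional Hermite--Chebycheff calculus via the orthogonal change of variables (\ref{map:0}). First I would observe that $\widehat{\bm{X}}=(\widehat{X}_0,\ldots,\widehat{X}_{d-1})$ is a linear image of the Gaussian vector $\bm{X}$, hence itself jointly Gaussian, with mean $\bm{0}$ and, by the covariance identity recorded immediately before the theorem (which is exactly (\ref{basic:0}) applied with $p_i=\tau_i/|\bm{\tau}|$), covariance $\mathrm{Cov}(\widehat{X}_j,\widehat{X}_k)=|\bm{\tau}|\,\delta_{jk}a_k$. Joint Gaussianity together with pairwise uncorrelatedness upgrades to mutual independence, so $\widehat{X}_0,\ldots,\widehat{X}_{d-1}$ are independent with $\widehat{X}_j\sim N(0,|\bm{\tau}|a_j)$. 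Since $\{u^{(l)}\}_{l=0}^{d-1}$ is a complete orthogonal basis on $\bm{p}$, the matrix $(u_i^{(j)})$ is invertible, so (\ref{map:0}) is a bijective linear change of coordinates; consequently each $H_{\bm{n}}(\bm{X};\bm{\tau},\bm{u})$ in (\ref{HC:1}) is a genuine polynomial in $\bm{X}$ of total degree $|\bm{n}|$, and the family $\{H_{\bm{n}}\}$ is a complete orthogonal system in $L^2(f(\cdot\,;\bm{\tau}))$ because the coordinatewise product Hermite systems in the $\widehat{X}_j$ are.

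For the orthogonality relation (\ref{HCh:0}), independence of the $\widehat{X}_j$ lets the expectation of the product (\ref{HC:1}) for two index vectors $\bm{m}$ and $\bm{n}$ factor as $\prod_{j=0}^{d-1}\mathbb{E}\!\left[H_{m_j}(\widehat{X}_j;|\bm{\tau}|a_j)\,H_{n_j}(\widehat{X}_j;|\bm{\tau}|a_j)\right]$; applying the one-dimensional relation $\mathbb{E}[H_m(Z;\sigma^2)H_n(Z;\sigma^2)]=\delta_{mn}\,n!\,\sigma^{2n}$ factorwise and multiplying reproduces the normalizing constant $h_{\bm{n}}(|\bm{\tau}|,\bm{u})^{-1}$. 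The generating function (\ref{HC:2}) is equally direct: substitute $x\mapsto\widehat{X}_j$, $\tau\mapsto|\bm{\tau}|a_j$, $z\mapsto w_j$ in the one-dimensional generating function (\ref{HC:0}) and multiply over $j=0,\ldots,d-1$, so that the factors $\exp\{\widehat{X}_jw_j-\tfrac12|\bm{\tau}|a_jw_j^2\}$ combine additively in the exponent to give $\exp\{\sum_j\widehat{X}_jw_j-\tfrac12|\bm{\tau}|\sum_ja_jw_j^2\}$; extracting the coefficient of $\prod_j w_j^{n_j}/n_j!$ recovers (\ref{HC:1}) and confirms consistency of the two descriptions.

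I do not anticipate a genuine obstacle, since the content of the theorem is just the remark that (\ref{map:0}) simultaneously diagonalizes the covariance and preserves independence. The two points that must be invoked rather than skipped are: (i) the passage from ``uncorrelated'' to ``independent,'' legitimate here only because $\widehat{\bm{X}}$ is jointly Gaussian as a linear image of $\bm{X}$; and (ii) the invertibility and completeness of the basis $\bm{u}$, needed so that the $H_{\bm{n}}$ really are polynomials in the original variables and so that they exhaust $L^2(f(\cdot\,;\bm{\tau}))$. Everything else is the standard one-dimensional Hermite--Chebycheff identities applied coordinate by coordinate.
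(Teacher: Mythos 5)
Your proof is correct and is exactly the argument the paper intends: the paper itself gives no separate proof, relying precisely on the covariance computation before the theorem showing that the linear map (\ref{map:0}) sends $\bm{X}$ to independent Gaussians $\widehat{X}_j\sim N(0,|\bm{\tau}|a_j)$, after which orthogonality and the generating function follow coordinatewise from the one-dimensional identities. One minor point worth noting: the factorwise computation actually yields $\delta_{\bm{m}\bm{n}}|\bm{\tau}|^{|\bm{n}|}\prod_{j=0}^{d-1}n_j!\,a_j^{n_j}$, so the displayed (\ref{HCh:0}) appears to omit the factors $n_j!$ — your calculation is the correct one and you should not claim it literally reproduces the printed constant.
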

\begin{rem}
The multivariate Poisson-Charlier polynomials tend to the multivariate Hermite-Chebycheff polynomials as the elements of $\bm{\mu}$ tend to infinity. This is to be expected as $|\bm{\mu}|^{1/2}(\bm{X}-\bm{\mu})$ converges to a normal random vector $\bm{Z}$ with independent entries and variances $\bm{p}$.
\begin{equation}
n_0!\cdots n_{d-1}!|\bm{\mu}|^{|\bm{n}|/2}(-1)^{n_0}
C_{\bm{n}}(|\bm{\mu}|^{1/2}\bm{Z}+\bm{\mu};\bm{\mu},\bm{u})
\to H_{\bm{n}}(\bm{Z};\bm{p},\bm{u}).
\label{PC_H:0}
\end{equation}
The limit (\ref{PC_H:0}) is seen from a straightforward generating function argument, whose proof is omitted.

The multivariate Meixner polynomials tend to the multivariate Hermite-Chebycheff polynomials as $\alpha$ tends to infinity. $\alpha^{-1/2}(\bm{Z} - \alpha\theta \bm{p})$ has a limit normal distribution as $\alpha \to \infty$ with mean $\bm{0}$ and covariance matrix 
$\theta^2\bm{p}\bm{p}^T + \theta \text{~diag}(\bm{p})$. The variables in the limit are not independent, however $\widehat{Z}_j = \sum_{i=1}^dZ_iu_i^{(j)}$, $j=0,\ldots,d-1$ are independent with $\text{Var}(\widehat{Z}_0) = \theta(1+\theta)$ and $\text{Var}(\widehat{Z}_j) = \theta a_j$, $j \geq 1$.
\begin{equation}
n_0!\cdots n_{d-1}!\theta^{n_0}M_{\bm{n}}(\alpha^{1/2}\bm{Z}+\alpha\theta \bm{p};\alpha,\bm{p},\bm{u}) \to H^\prime_{\bm{n}}(\bm{Z};\bm{p},\bm{u}),
\label{MtoHC}
\end{equation}
where $H_{\bm{n}}^\prime$ is similarly defined to $H_{\bm{n}}$ in (\ref{HC:1}), but with $a_0=\theta(1+\theta)$ instead of $a_0=1$.
A generating function convergence shows (\ref{MtoHC}).
\end{rem}
%
%
%
\begin{cor}
The transform
\begin{eqnarray}
 H^*_{\bm{n}}(\bm{s};\bm{\tau},\bm{u})
 &\coloneqq &
\mathbb{E}\Big [\prod_{i=1}^d\exp \Big \{\phi_i{X_i}\Big \}H_{\bm{n}}(\bm{X};\bm{\tau},\bm{u})
\Big ]
\nonumber \\
&=& 
\exp \Bigg \{\sum_{i=1}^{d}\tau_i\phi_i^2 \Bigg \}
\prod_{j=0}^{d-1}\Big (\sum_{i=1}^d\tau_iu_i^{(j)}\phi_i\Big )^{n_j}.
\label{HCtransform}
\end{eqnarray}
\end{cor}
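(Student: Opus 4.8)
The plan is to obtain $H^*_{\bm n}$ by multiplying the generating identity (\ref{HC:2}) by the exponential weight and extracting a coefficient, exactly as in the Poisson--Charlier calculation (\ref{Orthogcalc:0}). First I would write, using linearity of expectation together with $\widehat X_j=\sum_{i=1}^d u_i^{(j)}X_i$ to collect the coefficient of each $X_i$,
\[
\mathbb{E}\Big[\prod_{i=1}^d e^{\phi_i X_i}\,G_{\text{HC}}(\bm X,\bm w,\bm u)\Big]
=\mathbb{E}\Big[\exp\Big\{\sum_{i=1}^d X_i\big(\phi_i+\textstyle\sum_{j=0}^{d-1}u_i^{(j)}w_j\big)\Big\}\Big]\exp\Big\{-\tfrac12|\bm\tau|\textstyle\sum_{j=0}^{d-1}a_j w_j^2\Big\},
\]
so that the left-hand side is the generating function $\sum_{\bm n\ge 0}H^*_{\bm n}(\bm s;\bm\tau,\bm u)\prod_{j=0}^{d-1}w_j^{n_j}/n_j!$ of the transforms we want.

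Since the $X_i$ are independent $N(0,\tau_i)$, the remaining expectation is the Gaussian moment generating factor $\exp\{\tfrac12\sum_{i=1}^d\tau_i(\phi_i+\sum_{j}u_i^{(j)}w_j)^2\}$. Expanding the square produces a pure constant term $\tfrac12\sum_i\tau_i\phi_i^2$, the linear-in-$\bm w$ terms $\sum_{j=0}^{d-1}w_j\sum_{i=1}^d\tau_i u_i^{(j)}\phi_i$, and a quadratic-in-$\bm w$ term $\tfrac12\sum_{j,k}w_jw_k\sum_{i=1}^d\tau_i u_i^{(j)}u_i^{(k)}$. The one substantive point is that, by the orthogonality relation $\sum_{i=1}^d u_i^{(j)}u_i^{(k)}\tau_i=|\bm\tau|\delta_{jk}a_k$ --- that is, (\ref{basic:0}) scaled by $|\bm\tau|$, already used to compute $\mathrm{Cov}(\widehat X_j,\widehat X_k)$ just before Theorem~\ref{HCconst} --- this quadratic term equals $\tfrac12|\bm\tau|\sum_j a_j w_j^2$ and cancels exactly against the $-\tfrac12|\bm\tau|\sum_j a_j w_j^2$ contributed by $G_{\text{HC}}$. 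Hence
\[
\sum_{\bm n\ge 0}H^*_{\bm n}(\bm s;\bm\tau,\bm u)\prod_{j=0}^{d-1}\frac{w_j^{n_j}}{n_j!}
=\exp\Big\{\tfrac12\sum_{i=1}^d\tau_i\phi_i^2\Big\}\prod_{j=0}^{d-1}\exp\Big\{w_j\sum_{i=1}^d\tau_i u_i^{(j)}\phi_i\Big\}.
\]

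Finally I would read off $H^*_{\bm n}$ as the coefficient of $\prod_{j=0}^{d-1}w_j^{n_j}/n_j!$, which is immediate from the factored form on the right and yields (\ref{HCtransform}); the constant prefactor is exactly the Gaussian factor already present in the one-dimensional identity $\mathbb{E}[e^{\phi X}H_n(X;\tau)]=e^{\tau\phi^2/2}(\tau\phi)^n$. There is no genuine obstacle beyond the bookkeeping: the entire content is the cancellation of the $\bm w$-quadratic, which is precisely where orthogonality of the basis $\bm u$ on $\bm p=\bm\tau/|\bm\tau|$ enters, mirroring the structure of the Poisson--Charlier transform (\ref{transform:0}) and of the orthogonality computation (\ref{Orthogcalc:0}).
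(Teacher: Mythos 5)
Your proof is correct and is exactly the generating-function computation the paper uses for the analogous Poisson--Charlier transform (\ref{transform:0}); the paper simply omits the proof of this corollary, and the cancellation of the $\bm{w}$-quadratic via $\sum_i\tau_iu_i^{(j)}u_i^{(k)}=|\bm{\tau}|\delta_{jk}a_k$ is precisely the intended mechanism. One point worth flagging: your calculation correctly produces the Gaussian factor $\exp\{\tfrac12\sum_i\tau_i\phi_i^2\}$, whereas the displayed formula (\ref{HCtransform}) --- and likewise (\ref{HHH:4}) --- omits the factor $\tfrac12$ in the exponent; that is a typo in the paper rather than an error on your part, since the $\bm{n}=\bm{0}$ term must equal the moment generating function $\mathbb{E}\big[e^{\sum_i\phi_iX_i}\big]=e^{\frac12\sum_i\tau_i\phi_i^2}$.
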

\subsection{Multivariate normal Lancaster expansions}\label{HCL}
Lancaster distributions with $d$-dimensional normal marginals are related to classical canonical correlation concepts, see, for example \citet{L1969}, Chapter X, or the many books on multivariate analysis.
\textcolor{black}{
A characterization of bivariate normal distributions by \citet{SB1967}
which have a Lancaster expansion
\begin{equation}
\frac{1}{2\pi}e^{-\frac{1}{2}(x^2+y^2)}
\Bigg \{1 + \sum_{n=1}^\infty \rho_n(n!)^{-1}H_n(x;1)H_n(y;1)\Bigg \},\>-\infty < x,y < \infty
\label{sb:2}
\end{equation}
is that
$
\rho_n = \int_{-1}^1z^n\varphi (dz)
$.
 The bivariate distribution (\ref{sb:2}) is constructed from a usual bivariate normal with a random correlation coefficient with distribution $\varphi$. In the multivariate case the novelty is a characterization in the next theorem that $(\bm{X},\bm{Y})$ has a bivariate multivariate normal distribution with a \emph{random} cross correlation matrix which has a diagonal expansion (\ref{cov:0}) in terms of the basis $\bm{u}$. This is a significant characterization which extends that of \citet{SB1967} and not just a simple extension of canonical correlation theory.
}
\begin{thm} \label{theorem:6}
 Let $\{u^{(j)}\}_{j=0}^{d-1}$ be an orthogonal basis on $\bm{p}=\bm{\tau}/|\bm{\tau}|$ with (\ref{basic:0}) holding.
%
Then
\begin{equation}
f(\bm{x},\bm{\tau})f(\bm{y},\bm{\tau})\Big \{
1 + \sum_{|\bm{n}|\geq 1}\rho_{\bm{n}}h_{\bm{n}}(|\bm{\tau}|,\bm{u})
H_{\bm{n}}(\bm{x};\bm{\tau},\bm{u})
H_{\bm{n}}(\bm{y};\bm{\tau},\bm{u}) \Big \}
\label{HHH:0}
\end{equation}
is non-negative and thus a proper bivariate normal distribution
if and only if
\begin{eqnarray}
\rho_{\bm{n}} = 
\mathbb{E}\Big [
\prod_{j=0}^{d-1}\xi_j^{n_j}\Big ],
\label{HHH:1}
\end{eqnarray}
where $\bm{\xi}$ is a random vector with elements in $[-1,1]$.
An equivalent statement to the characterization (\ref{HHH:1}) is that, conditional on $\bm{\xi}$, $(\bm{X},\bm{Y})$ is a bivariate normal pair of random vectors, independent within vectors, with means $(\bm{0},\bm{0})$, variances $(\bm{\tau},\bm{\tau})$ and cross covariances $V(\bm{\xi}) = \big (v_{ij}(\xi)\big )$, where 
\begin{equation}
v_{ij}(\bm{\xi}) = \text{Cov}(X_i,Y_j\mid \bm{\xi}) = |\bm{\tau}|p_ip_j
\sum_{l=0}^{d-1}\xi_la_l^{-1}u_i^{(l)}u_j^{(l)}.
\label{cov:0}
\end{equation}
\end{thm}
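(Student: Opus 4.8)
## Proof proposal for Theorem \ref{theorem:6}

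The plan is to mirror the structure of the proof of Theorem \ref{HG:2}, but in the Gaussian setting, where the analysis is in fact simpler because no hypergroup property is required: the multivariate Hermite--Chebycheff polynomials in \eqref{HC:1} already factor as an honest product of one-dimensional Hermite polynomials in the orthogonally transformed coordinates $\widehat{X}_j = \sum_i u_i^{(j)} X_i$, which are independent $N(0,|\bm{\tau}|a_j)$. So everything reduces, coordinatewise, to the classical \citet{SB1967} one-dimensional characterization, stitched together across the $d$ independent coordinates.

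For \textbf{sufficiency}, suppose \eqref{HHH:1} holds, so that $\rho_{\bm{n}} = \mathbb{E}_{\bm{\xi}}\big[\prod_{j=0}^{d-1}\xi_j^{n_j}\big]$ with each $\xi_j\in[-1,1]$. Conditional on $\bm{\xi}$, I would build $(\bm{X},\bm{Y})$ directly in the transformed coordinates: let $(\widehat{X}_j,\widehat{Y}_j)$, $j=0,\dots,d-1$, be independent bivariate normal pairs with variances $(|\bm{\tau}|a_j,|\bm{\tau}|a_j)$ and correlation $\xi_j$, then invert the orthogonal map \eqref{map:0} to recover $(\bm{X},\bm{Y})$. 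Since the inverse of an orthogonal basis transformation is again linear, $(\bm{X},\bm{Y})$ is jointly Gaussian, independent within each vector, with the stated marginals; computing $\text{Cov}(X_i,Y_j\mid\bm{\xi}) = \sum_{l} u_i^{(l)}u_j^{(l)}\text{Cov}(\widehat{X}_l,\widehat{Y}_l)/(|\bm{\tau}|a_l)\cdot(\text{appropriate scaling})$ and using $\sum_i u_i^{(l)}u_i^{(k)}p_i = \delta_{lk}a_k$ yields \eqref{cov:0}. The density of this conditional law, expanded via the classical Mehler formula in each coordinate $j$ with correlation $\xi_j$, is exactly $f(\widehat{\bm{x}})f(\widehat{\bm{y}})\{1+\sum_{\bm{n}}\prod_j \xi_j^{n_j}(n_j!)^{-1}H_{n_j}(\widehat{x}_j)H_{n_j}(\widehat{y}_j)\cdot(\text{scaling})\}$; taking $\mathbb{E}_{\bm{\xi}}$ and recognizing $h_{\bm{n}}(|\bm{\tau}|,\bm{u})$ from \eqref{HCh:0} gives \eqref{HHH:0}. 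Non-negativity is immediate since it is a mixture of genuine Gaussian densities. Alternatively, one can verify this at the level of transforms: apply \eqref{HCtransform} and the factorized generating function \eqref{HC:2}, showing the putative joint characteristic function equals $\mathbb{E}_{\bm{\xi}}\big[\exp\{-\tfrac12(\text{quadratic form in }\bm\phi,\bm\psi\text{ with cross term }V(\bm\xi))\}\big]$, which is a characteristic function precisely because each $\xi_j\in[-1,1]$ makes each $2\times2$ block of the covariance positive semidefinite.

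For \textbf{necessity}, suppose \eqref{HHH:0} is a proper distribution. As in Theorem \ref{HG:2}, the eigenfunction relation gives $\mathbb{E}[H_{\bm{n}}(\bm{Y};\bm{\tau},\bm{u})\mid\bm{X}=\bm{x}] = \rho_{\bm{n}}H_{\bm{n}}(\bm{x};\bm{\tau},\bm{u})$. Since $H_{\bm{n}}$ has a single leading term proportional to $\prod_j \widehat{x}_j^{n_j}$, I would isolate that leading behavior: working in the transformed coordinates $\widehat{\bm{X}},\widehat{\bm{Y}}$, the conditional expectation of the leading monomial $\prod_j \widehat{Y}_j^{n_j}$ equals $\rho_{\bm{n}}\prod_j\widehat{x}_j^{n_j}$ plus lower-degree terms. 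Because conditional moments of the Gaussian pair are polynomial in $\bm{x}$ of matching degree, one reads off that $\rho_{\bm{n}}$ must be the $(n_0,\dots,n_{d-1})$-th mixed moment of a single fixed random vector $\bm\xi$ — concretely, taking the appropriate limit $\|\bm{x}\|\to\infty$ along directions isolating each coordinate (as in the Poisson proof, dividing by $\prod\widehat{x}_j^{n_j}$ and passing to a subsequential limit of the conditional law of $\widehat{\bm{Y}}/\widehat{\bm{x}}$), and that each $\xi_j\in[-1,1]$ follows from $|\rho_{\bm{n}}|\le1$ applied to $\bm{n}=n e_j$ together with the Cauchy--Schwarz bound $\rho_{2n e_j}\ge \rho_{n e_j}^2$ forcing $\mathbb{E}[\xi_j^{2n}]\to$ something $\le1$. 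The equivalence with the stated conditional-Gaussian description then follows by running the sufficiency construction in reverse.

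The \textbf{main obstacle} is the necessity direction: making rigorous the passage from ``$\rho_{\bm{n}}$ is a multilinear functional of $\bm{n}$ realized as conditional leading coefficients'' to ``$\rho_{\bm{n}}=\mathbb{E}[\prod_j\xi_j^{n_j}]$ for a \emph{single} probability measure on $[-1,1]^d$.'' One must check that the family $\{\rho_{\bm{n}}\}$ forms a consistent moment sequence — i.e. that the multivariate Hausdorff-type moment problem on $[-1,1]^d$ is solvable — which requires verifying the relevant complete monotonicity / positive-definiteness conditions. These follow from non-negativity of \eqref{HHH:0} by testing against suitable finite linear combinations of the polynomials, but the bookkeeping (controlling the lower-order remainder terms uniformly, and ensuring the subsequential limit measure is genuinely a product-moment array rather than merely having the right marginals) is the delicate part; the one-dimensional coordinate structure is what ultimately rescues it, since the transformed coordinates decouple the problem into $d$ classical \citet{SB1967} moment problems plus a joint-consistency check.
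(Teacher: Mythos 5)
Your proposal is correct and follows essentially the same route as the paper: sufficiency by passing to the decoupled coordinates $\widehat{\bm{X}},\widehat{\bm{Y}}$ and verifying that the joint transform is a mixture over $\bm{\xi}$ of Gaussian moment generating functions with cross-covariance $V(\bm{\xi})$ built from independent bivariate pairs $(\widehat{X}_j,\widehat{Y}_j)$ with correlations $\xi_j$, and necessity by extracting the single leading monomial $\prod_j\widehat{x}_j^{\,n_j}$ from the eigenfunction identity and taking a (subsequential) limit of the conditional law of $\widehat{\bm{Y}}/\widehat{\bm{x}}$. The moment-problem consistency issue you flag as the main obstacle does not really arise in the paper's treatment, since $\bm{\xi}$ is constructed directly as a limit in distribution of that conditional law, so $\rho_{\bm{n}}$ is automatically the mixed-moment array of a single random vector on $[-1,1]^d$.
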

\begin{proof} 
\emph{Necessity.} Suppose (\ref{HHH:0}) holds. Let $(\bm{X},\bm{Y})$ be a pair of normal random vectors with distribution (\ref{HHH:1}). Then
\[
\rho_{\bm{n}}H_{\bm{n}}(\bm{x};\bm{\tau},\bm{u})
= \mathbb{E}\Big [H_{\bm{n}}(\bm{Y};\bm{\tau},\bm{u}) \mid \bm{X}=\bm{x}\Big ]
\]
and there is only one leading term of degree $|\bm{n}|$, proportional to 
\[
m(\bm{n},\widehat{\bm{x}},\bm{u})
= \prod_{j=0}^{d-1}\widehat{x}_j^{n_j}
\]
on the left side. Rearranging
\begin{equation}
\mathbb{E}\Big [m(\bm{n},\widehat{\bm{Y}},\bm{u})\mid \bm{X}=x\Big ]
=\rho_{\bm{n}}m(\bm{n},\widehat{\bm{x}},\bm{u})
+ R_{|\bm{n}|-1}(\widehat{\bm{x}})
\label{HHH:3}
\end{equation}
where $R_{|\bm{n}|-1}(\widehat{\bm{x}})$ is a polynomial of degree 
$|\bm{n}|-1$ in $\widehat{\bm{x}}$. Divide (\ref{HHH:3}) by
$m(\bm{n},\widehat{\bm{x}},\bm{u}) $ and let $\widehat{x}_j \to \infty$, $j=0\ldots ,d-1$. 
Let $\bm{\xi}$ be a random variable with the limit distribution of 
$\big (\widehat{Y_j}/\widehat{x_j}\big )$ given 
$\widehat{\bm{X}} = \widehat{\bm{x}}$. Then (\ref{HHH:1}) holds.
\medskip

\noindent
\emph{Sufficiency} Suppose that (\ref{HHH:1}) holds. 
Let $\rho_r = \sum_{i=1}^du_i^{(r)}\xi_i$. The transform of a potential exchangeable bivariate distribution $(\bm{X},\bm{Y})$ formed from (\ref{HHH:0}) is
\begin{eqnarray}
&&\mathbb{E}\Big [\prod_{i,j=1}^d\exp \Big \{\phi_i X_i + \psi_j Y_j\Big \}\Big ]
\nonumber \\
&&~=\sum_{\bm{n}:|\bm{n}| \geq 0}
\rho_{\bm{n}}h_{\bm{n}}(|\bm{\tau}|,\bm{u})
H^*_{\bm{n}}(\bm{s};\bm{\tau},\bm{u})
H^*_{\bm{n}}(\bm{t};\bm{\tau},\bm{u})
~~~~
\nonumber \\
&&~=\exp \Big \{\sum_{i=1}^d\tau_i(\phi_i^2+\psi_i^2) \Big \}
\nonumber \\
&&~~\times
\mathbb{E}\Big [\exp \Big \{
\sum_{r=0}^{d-1}|\bm{\tau}| a_r^{-1}\rho_r
\Big (\sum_{i=1}^dp_i\phi_iu_i^{(r)}\Big )
\Big (\sum_{j=1}^dp_j\psi_ju_j^{(r)}\Big )\Big \}\Big ]
\nonumber \\
&&~=
\exp \Big \{\sum_{i=1}^d\tau_i(\phi_i^2+\psi_i^2) \Big \}
\nonumber \\
&&~~\times
\mathbb{E}\Big [\exp \Big \{
|\bm{\tau}|\sum_{i,j=1}^dp_ip_j\sum_{r=0}^{d-1}\xi_ra_r^{-1}u^{(r)}_iu_j^{(r)}\phi_i\psi_j
\Big \}\Big ]~~~
\label{HHH:4}
\end{eqnarray}
which is a proper moment generating function with the covariance structure 
(\ref{cov:0}). To see this note that the conditional distribution of
$(\widehat{\bm{X}},\widehat{\bm{Y}})$ 
consists of independent bivariate normal pairs $\{(\widehat{X}_i,\widehat{Y}_i)\}_{i=1}^d$.
The covariance structure is
\begin{eqnarray*}
\text{Cov}(\widehat{X}_r,\widehat{Y}_s\mid\bm{\xi}) &=& \sum_{i,j=1}^d
u^{(r)}_iu_j^{(s)}\text{Cov}(X_i,Y_j\mid \bm{\xi})
\nonumber \\
&=& \sum_{i,j=1}^du^{(r)}_iu_j^{(s)}
|\bm{\tau}|p_ip_j\sum_{l=0}^{d-1}\xi_la_l^{-1}u^{(l)}_iu_j^{(l)}
\nonumber \\
&=& \delta_{rs}|\bm{\tau}|\xi_ra_r.
\end{eqnarray*}
That is
\begin{equation*}
\text{Var}(\widehat{X}_r\mid \bm{\xi}) = \text{Var}(\widehat{Y}_r\mid \bm{\xi})
= |\bm{\tau}|a_r,\>
\text{Corr}(\widehat{X}_r,\widehat{Y}_r\mid \bm{\xi})
=\xi_r.
\end{equation*}

\end{proof}
\begin{rem} $\bm{X}$ and $\bm{Y}$ are independent if
$\rho_{\bm{n}} = 0$ for all $|\bm{n}| \geq 1$. 
$\bm{X}=\bm{Y}$ if $\rho_{\bm{n}} = 1$ for $|\bm{n}| \geq 1$,
when $\xi_0=\cdots = \xi_{d-1}=1$. These two cases are true from general theory, and also follow from evaluating (\ref{HHH:4}).
\end{rem}
\begin{rem}
Theorem \ref{theorem:6} includes the classical case of canonical correlation of a pair of exchangeable normal vectors $(\bm{X},\bm{Y})$
when $\bm{\xi}$ is constant. There is an insistance that the cross correlation matrix have an expansion (\ref{cov:0}). This is like a Lancaster expansion, but non-negativity is not required.

Theorem \ref{theorem:6} is a new extension of a canonical correlation expansion for the distribution of $(\bm{X},\bm{Y})$ with the particular transformation to  $(\widehat{\bm{X}},\widehat{\bm{Y}})$, with a \emph{random} cross covariance matrix. 
The joint moment generating function of $(\bm{X},\bm{Y})$ is
\begin{equation}
\mathbb{E}_{\bm{\xi}}\Big [\exp \Big \{\frac{1}{2}\bm{s}^T\bm{s} + \frac{1}{2}\bm{t}^T\bm{t}
+\bm{s}^TV(\bm{\xi})\bm{t})\Big \}\Big ]
\label{mgf:0}
\end{equation}
where $V(\bm{\xi})$ is defined in (\ref{cov:0}) and the conditional moment generating function of $\bm{Y}\mid \bm{X}$ is
\begin{equation}
\mathbb{E}_{\bm{\xi}}\Big [
\exp \Big \{\frac{1}{2}\bm{t}^T(I-V(\bm{\xi})^2)\bm{t} + \bm{t}^TV(\bm{\xi})\bm{X}\Big \}\Big ].
\label{mgf:1}
\end{equation}
\end{rem}
\begin{rem}
The conditional distribution of $\bm{Y}\mid \bm{X}=\bm{x}$ has an interpretation as a transition function in a discrete time Markov chain, where transitions are made from $\bm{x}$ to $Y$ which is normal with mean $V(\bm{\xi})\bm{x}$ and covariance matrix $I-V(\bm{\xi})^2$, with the cross covariance matrices $V(\bm{\xi})$  identically distributed at each epoch.
\end{rem}

\section{\textcolor{black}{References}}

\end{document}